\newcommand{\revsMM}[1]{#1}
\def\ii{{\sqrt{-1}}}
\def\tX{{\widetilde{X}}}
\def\hPsi{{\widehat{\Psi}}}
\def\hmu{{\widehat{\mu}}}
\def\hphi{{\widehat{\phi}}}
\def\tX{{{\widetilde{X}}}}
\def\tX{\widetilde{X}}
\def\Xrs{{X}}
\def\hS{{\widehat{S}}}
\def\fB{{\mathfrak{B}}}
\def\cK{{\mathcal{K}}}
\def\wt{\mathrm {wt}_\infty}
\def\nuI{{\nu}}
\def\hS{{\widehat{S}}}
\def\hnuI{{\nu^\circ}}
\def\twc{{\widetilde{w}^\circ}}
\def\wc{{{w}^\circ}}
\def\wc{{{w}^\circ}}
\def\CC{{\mathbb C}}
\def\ZZ{{\mathbb Z}}
\def\NN{{\mathbb N}}
\def\RR{{\mathbb R}}
\def\WW{{{\mathcal W}}}
\def\cO{{\mathcal{O}}}
\def\cA{{\mathcal{A}}}
\def\PP{{\mathbb P}}
\def\JJ{{\mathcal J}}
\def\cS{{\mathcal S}}
\def\Sp{{\mathrm{Sp}}}
\def\tw{{{\widetilde{w}}}}
\def\ker{\mathrm {ker }}
\def\phiH#1{{\widehat{\phi}_{#1}}}
\def\hpsig#1{{\widehat\psi_{#1}}}
\def\hPsig#1{{\widehat\Psi_{#1}}}
\def\hmu#1{{\widehat\mu_{#1}}}
\newtheorem{theorem}{Theorem}[section]
\newtheorem{definition}[theorem]{Definition}
\newtheorem{proposition}[theorem]{Proposition}
\newtheorem{remark}[theorem]{Remark}
\newtheorem{lemma}[theorem]{Lemma}
\def\dfrac#1#2{{\displaystyle\frac{#1}{#2}}}
\def\book#1{\rm{#1}, }
\def\paper#1{\textit{#1}, }
\def\jour#1{\rm{#1}, }
\def\yr#1{({\rm{#1}) }}
\def\vol#1{\textbf{#1}}
\def\pages#1{\rm{#1}}
\def\publ#1{\rm{#1}, }
\def\by#1{{\rm{#1}, }}
\begin{document}

\title{Jacobi inversion formulae for a curve in Weierstrass normal form}

\author[J, Komeda]{Jiyro Komeda}
\address[J, Komeda]{Department of Mathematics,
Center for Basic Education and Integrated Learning,
Kanagawa Institute of Technology,
1030 Shimo-Ogino, Atsugi, Kanagawa 243-0292, Japan}
\email{komeda@gen.kanagawa-it.ac.jp}
\author[S. Matsutani]{Shigeki Matsutani}
\address[S. Matsutani]{Industrial Mathematics,
National Institute of Technology, Sasebo College,
1-1 Okishin, Sasebo, Nagasaki, 857-1193, Japan}
\email{smatsu@sasebo.ac.jp}

\date{}

\begin{abstract}
We consider 
a pointed curve $(X,P)$ which is given by the Weierstrass normal form,
$y^r + A_{1}(x) y^{r-1} + A_{2}(x) y^{r-2} +\cdots + A_{r-1}(x) y
 + A_{r}(x)$ where
$x$ is an affine coordinate on $\mathbb{P}^1$,
the point $\infty$ on $X$ is mapped to $x=\infty$, and 
 each $A_j$ is a
polynomial in $x$ of degree $\leq js/r$ for a certain coprime positive
integers $r$ and $s$ ($r<s$)
so that its Weierstrass non-gap sequence at $\infty$ is a numerical 
semigroup.
It is a natural generalization of Weierstrass' equation in the 
Weierstrass elliptic function theory.
We investigate such a curve and
show the Jacobi inversion formulae of the strata of its Jacobian
using the result of Jorgenson \cite{Jo}. 
\end{abstract}

\subjclass[2010]{
Primary,
14K25, 
14H40.  
Secondary,
14H55, 
14H50. 
}

\keywords{Keywords: Weierstrass normal form, Weierstrass semigroup, Jacobi inversion formulae, non-symmetric semigroup}


\maketitle

\begin{center}
Dedicated for Emma Previato's 65th birthday.
\end{center}

\section{Introduction}\label{introduction}

The Weierstrass $\sigma$ function
is defined for an elliptic curve of Weierstrass' equation
$y^2 = 4 x^3 - g_2 x - g_3$ in Weierstrass' elliptic function theory \cite{WW};
$\Bigr(\wp(u)=-\dfrac{d^2}{du^2}\ln\sigma(u), $ 
$\dfrac{d\wp(u)}{du}\Bigr)$ 
is identical to a point $(x,y)$ of the curve. 
The $\sigma$ function is related to Jacobi's $\theta$ function. 
As the $\theta$ function was generalized by Riemann for any Abelian 
variety,
its equivalent function Al was defined for any hyperelliptic Jacobian by 
Weierstrass \cite{W54} and was refined by 
Klein \cite{Klein1, Klein2} as a generalization of the 
elliptic $\sigma$ function such that
 it satisfied a modular invariance
under the action of $\Sp(2g, \mathbb{Z})$ (up to a root of unity).

Recently the studies on the $\sigma$ functions in the XIXth century
have been reevaluated and reconsidered.
Grant and \^Onishi gave their modern perspective and
showed precise structures of hyperelliptic Jacobians
from a viewpoint of number theory using the $\sigma$ functions
\cite{Gr,O}, whereas
Buchstaber,  Enolski\u{\i} and  Le\u{\i}kin \cite{BEL}
and 
Eilbeck, Enolski\u{\i} and  Le\u{\i}kin \cite{EEL} investigated the
hyperelliptic $\sigma$ functions
for their applications to the integrable system.
One of these authors also applied Baker's results to 
the dynamics of loops in a plane \cite{Pr93,MP16}
associated with the modified Korteweg-de Vries hierarchy \cite{M02,M10,MP16}.
Further Buchstaber,  Le\u{\i}kin and Enolski\u{\i}  \cite{BLE}
and Eilbeck, Enolski\u{\i} and  Le\u{\i}kin \cite{EEL} 
generalized the $\sigma$ function to more general plane curves.
Nakayashiki connected these $\sigma$ functions with 
the $\theta$ functions in Fay's study \cite{F} and   
the $\tau$ functions in the Sato universal Grassmannian theory \cite{N10,N16}.

Weierstrass' elliptic function theory provides the concrete and
explicit descriptions of the geometrical, algebraic and analytic
properties of elliptic curves and their related functions \cite{WW},
and thus it has strong effects on various fields in mathematics, physics and
technology. We have reconstructed the theory of Abelian functions of
curves with higher genus to give the vantage point like Weierstrass's
elliptic function theory [MK, MP1-5, KMP2-4].


In the approach of the $\sigma$ function,
 the pointed curve $(X,P)$ is crucial,
since the relevant objects are written in terms of 
$H^0(X, \cO(*P))$. 
The representation of the affine curve 
$X\setminus P$ is therefore also relevant,
and so is the Weierstrass semigroup (W-semigroup) at $P$.
It is critical to find the proper basis of $H^0(X, \cO(*P))$
to connect the (transcendental) 
$\theta$ ($\sigma$) function with the (algebraic) functions 
and differentials of the curve;
in  Mumford's investigation, the basis corresponds to $U$ function
of the Mumford triplet, $UVW$ \cite{Mu}, which is identical to
a square of Weierstrass $al$ function \cite{W54}.
The connection means the Abel-Jacobi map and 
its inverse correspondence, or the Jacobi inversion formula.

Using the connection, we represent the affine curve $X\setminus P$
in terms of the $\sigma$ ($\theta$) functions and find
these explicit descriptions of the geometrical, algebraic and analytic
properties of the curve and their related functions.
These properties are given as differential equations and related to
the integrable system as in \cite{B03,Mu,Pr85,Pr86,EEMOP08}.
Since the 
additional structure of the hyperelliptic curves \cite{EEMOP07}
is closely related
to Toda lattice equations and classical Poncelet's problem, 
the additional structures were also revealed as dynamical systems 
\cite{KMP12}.

Using the $\sigma$ functions, we have the Jacobi inversion formulae
of the Jacobians, 
e.g., 
for hyperelliptic curves \cite{W54,B97},
for cyclic $(n,s)$ curves (super-elliptic curves) including
the strata of the Jacobians \cite{MP08,MP14}, for curves with
``telescopic'' Weierstrass semigroups \cite{A16}
and cyclic trigonal curves \cite{MK, KMP13,KMP16,KMP18}. 
In this paper, we consider the Jacobi inversion 
problem for a general Riemann surface.
The main effort in these studies is directed toward explicitness. 

Since it is known that every compact Riemann surface is birationally-equivalent
to a curve given by the Weierstrass normal form \cite{WN,B97,kato},
in this paper, we investigate the Jacobi inversion formula of such a curve,
which is called Weierstrass curve; its definition is given in 
Proposition \ref{prop:WNC}.
We reevaluate Jorgenson's results for the $\theta$ divisor of
a Jacobian by restricting ourselves to the Jacobian of a Weierstrass curve.
Then we show explicit Jacobi inversion formulae 
even for the strata of the 
$\theta$ divisor 
in Theorem \ref{thm:JIF} as our main theorem of this paper.

This approach enables us to investigate the properties of 
image of the Abelian maps of a compact Riemann surface more precisely.
In the case of the hyperelliptic Jacobian,
the sine-Gordon equation gives the relation among the meromorphic
functions on the Jacobian, which is proved by the Jacobi inversion 
formula.
Further the Jacobi inversion formula of 
the stratum of the hyperelliptic Jacobian generalizes the sine-Gordon
equation to differential relations in 
the strata \cite{M05}, 
Very recently, Ayano and Buchstaber found  
novel differential equations that characterizes the stratum of
the hyperelliptic Jacobian of genus three \cite{AB}. 
Accordingly, it is
expected that the results in this paper might
also lead a generalization of the integrable system and 
bring out the geometric and algebraic data of the strata of the Jacobians.

We use the word ``curve'' for a compact Riemann surface: on occasion, we use a
singular representation of the curve; since there is a unique smooth curve
with the same field of meromorphic function, this should not cause confusion.
We let the non-negative integer denoted by $\NN_0$ 
and the positive integer by $\NN$.

The contents  of this paper are organized as follows:
in Section 2, we collect definitions, properties,
and examples of Weierstrass curves and Weierstrass semigroups.
Section 3 provides the Abelian map (integral) and  the $\theta$ functions
 as the transcendental properties of the Abelian map  (integral) image.
We also mentioned Jorgenson's result there.
In Section 4 we show the properties of holomorphic one forms
of Weierstrass curve $(X, P)$ in terms of 
the proper basis of $H^0(X, \cO(*P))$ as
the algebraic property of the curve $X$.
In Section 5 we applied these data to the result of Jorgenson
and show our main theorem, the explicit representations of 
Jacobi inversion formulae
of a general Weierstrass curve in Theorem \ref{thm:JIF}.

\noindent
{\bf{
Acknowledgments:}}
The second named author 
thanks Professor Victor Enolskii for the notice on the paper of Jorgenson 
\cite{Jo}, 
though he said that it was learned from Professor Emma Previato.
The authors
 were supported by
the Grant-in-Aid for Scientific 
Research (C) of Japan Society for the Promotion
of Science, 
Grant No. 18K04830 and Grant No. 16K05187 respectively.
Finally, the authors would like to thank 
Professor Emma Previato for her extraordinary generosity
and long collaboration, and dedicate this paper to her.

\section{Weierstrass normal form and Weierstrass curve}\label{weierstrass}
We set up the notations for  Weierstrass semigroups and Weierstrass normal
form, and recall the results we use.

\subsection{Numerical semigroups and Weierstrass semigroups}
\label{weierstrassN}
A pointed curve is a pair $(X,P)$, with $P$ a point of a curve $X$;
the Weierstrass semigroup for $X$ at $P$, which we denote by $H(X,P)$, 
 is the complement 
of the Weierstrass gap sequence $L$, namely  the set of natural
numbers 
$\{\ell_0 < \ell_1 < \cdots < \ell_{g-1} \}$ such that 
$H^0(\cK_X-\ell_iP)\ne H^0(\cK_X-(\ell_i-1)P)$, for
$\cK_X$ a  
representative of the canonical divisor (we identify
divisors with the corresponding sheaves).
By the Riemann-Roch theorem, $H(X,P)$
 is a numerical semigroup.
In general, a numerical semigroup $H$ has 
a unique (finite) minimal set of generators,  $M=M(H)$ and
the finite cardinality $g$  of $L(M)=\NN_0\backslash H$;
$g$ is the genus of $H$ or $L$.
We let $a_{\min}(H)$ be the smallest positive integer of $M(H)$.
For example,
\begin{gather*}
\begin{split}
&L(M)=\{1,2,3,4,6,8,9,11,13,16,18,23\}, \
 a_{\min}=5,\ 
 \mbox{for} \ M=\langle 5,7\rangle,\\
&L(M)=\{1,2,3,4,6,8,9,13\}, \  a_{\min}=5,\ 
\mbox{for} \ M=\langle 5,7,11\rangle,
\mbox{ and } \\
&L(M)=\{1, 2, 4, 5\}, \  a_{\min}=3, \ \mbox{for} \ 
M=\langle 3,7,8\rangle.\\
\end{split}
\end{gather*}
The Schubert index of the set $L(M(H))$ is
\begin{equation}
\alpha(H) :=\{\alpha_0(H), \alpha_1(H), \ldots, \alpha_{g-1}(H)\},
\label{eq:alphaL}
\end{equation}
where 
$\alpha_i(H) := \ell_i - i -1$ \cite{EH}.
By letting the row lengths be
$\Lambda_i = \alpha_{g-i}+1$, $i\le g$,
we have the Young diagram of the semigroup,
$\Lambda:
=(\Lambda_1, ..., \Lambda_g)$.
If for a numerical semigroup $H$,  there exists a curve whose
Weierstrass non-gap sequence is identical to $H$, we call the semigroup
$H$ {\it{Weierstrass}}.
It is known that every numerical semigroup is not Weierstrass.
A Weierstrass semigroup is called symmetric when 
$2g-1$ occurs in the gap sequence.
It implies that
$H(X,P)$ is symmetric if and only if its Young diagram is symmetric,
in the sense of being invariant under reflection across the main diagonal.

\subsection{Weierstrass normal form \revsMM{and Weierstrass curve}}

We now review the ``Weierstrass normal form'',
which is a
generalization of Weierstrass' equation for elliptic curves \cite{WW}.
Baker \cite[Ch. V, \S\S 60-79]{B97} 
gives a complete review, proof and examples of the theory,
though he calls it ``Weierstrass canonical form''.  
Here we refer to Kato \cite{kato}, who 
also produces this representation, with proof.

Let $m=a_{\mathrm{min}}(H(X,P))$ and let $n$ be the least integer in $H(X,P)$ 
which is prime to $m$.  By denoting $P= \infty$,
$X$ can be viewed as an $m:1$  cover of $\mathbb{P}^1$
via an equation of the following form:
$f(x,y)=y^m+A_1(x)y^{m-1}+\cdots+A_{m-1}(x)y+A_m(x)=0$, where 
$x$ is an affine coordinate on $\mathbb{P}^1$,
the point $\infty$ on $X$ is mapped to $x=\infty$, and 
 each $A_j$ is a
polynomial in $x$ of degree $\leq jn/m$, with equality being attained only for
$j=m$.  The algebraic curve Spec $\CC[x,y]/f(x,y)$, is, in general,
singular and we denote by $X$ its unique normalization.
Since \cite{kato} is only available in Japanese, and since the
meromorphic functions constructed in his proof will be used in our examples
 in \ref{rmk:WNF} below,
we reproduce his proof in sketch:
it shows that the
affine ring of the curve $X\backslash\infty$ can be generated by functions that have
poles  at $\infty$ corresponding to the $g$ non-gaps.

\begin{proposition} {\rm{\cite{kato}}} \label{prop:WNC}
For a pointed curve $(X,\infty )$ 
with Weierstrass semigroup $H(X,\infty)$
for which  $a_{\mathrm{min}}(H(X,\infty))=m$, 
we let
$m_i :=\min\{h \in H(X,\infty)\setminus \{0\}\ 
|\ i \equiv h \ \mbox{ modulo } m\}$,
$i =0, 1, 2, \ldots, m-1$,
$m_0 = m$ and  $n=\min\{m_j \ | \ (m, j) = 1\}$.
$(X,\infty)$
 is defined by an irreducible equation,
\begin{equation}
f(x,y)= 0,
\label{eq:WNF1a}
\end{equation}
for a polynomial $f\in \CC[x,y_n]$ of type,
\begin{equation}
f(x,y):=y^m + A_{1}(x) y^{m-1}
+ A_{2}(x) y^{m-2}
+\cdots + A_{m-1}(x) y
 + A_{m}(x),
\label{eq:WNF1b}
\end{equation}
where the $A_i(x)$'s are polynomials in $x$,
$$
A_i = \sum_{j=0}^{\lfloor i n/m\rfloor} \lambda_{i, j} x^j,
$$
and $\lambda_{i,j} \in \CC$, $\lambda_{m,m}=1$.
\end{proposition}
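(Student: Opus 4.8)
The plan is to realize the function field $\CC(X)$ as $\CC(x,y)$ for two carefully chosen generators and then read off $f$ as the minimal polynomial of $y$ over $\CC(x)$, controlling the degrees of the coefficients through a local analysis at $\infty$. First I would fix a function $x \in H^0(X,\cO(*\infty))$ whose only pole is at $\infty$, of exact order $m=m_0=a_{\mathrm{min}}(H(X,\infty))$; such $x$ exists because $m \in H(X,\infty)$. Likewise I would choose $y$ with a single pole at $\infty$ of exact order $n$, which exists because $n\in H(X,\infty)$ and is coprime to $m$ by construction (a semigroup of finite complement always contains integers prime to $m$). The function $x$ exhibits $X$ as an $m{:}1$ cover of $\PP^1$ totally ramified over $x=\infty$, so $[\CC(X):\CC(x)]=m$.

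To see that $x$ and $y$ already generate, I would use the normalized valuation $v_\infty$ at $\infty$, so $v_\infty(x)=-m$ and $v_\infty(y)=-n$, together with the place $\mathfrak{p}$ of $\CC(x,y)$ lying below $\infty$. Writing $e'$ for the ramification index of $\infty$ over $\mathfrak{p}$, the relations $-m=e'\,v_{\mathfrak{p}}(x)$ and $-n=e'\,v_{\mathfrak{p}}(y)$ give $e'\mid m$ and $e'\mid n$, whence $e'=1$ since $\gcd(m,n)=1$. Consequently $\mathfrak{p}$ is totally ramified of index $m$ over $x=\infty$, so $m$ divides $[\CC(x,y):\CC(x)]$; as this degree also divides $m$, it equals $m$ and $\CC(x,y)=\CC(X)$. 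The minimal polynomial of $y$ over $\CC(x)$ therefore has degree exactly $m$, and I would write it $f(x,y)=y^m+A_1(x)y^{m-1}+\cdots+A_m(x)$. Because $y$ is regular on $X\setminus\{\infty\}$ while $x$ has its only pole there, $y$ lies in the integral closure of $\CC[x]$ in $\CC(X)$ (the affine coordinate ring of the smooth affine curve $X\setminus\{\infty\}$), so $y$ is integral over $\CC[x]$; since $\CC[x]$ is integrally closed, every $A_i\in\CC[x]$. Irreducibility of $f$ over $\CC[x,y]$ then follows from Gauss's lemma, and $\Spec \CC[x,y]/(f)$ has function field $\CC(X)$, so its normalization is $X$.

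The heart of the argument, and the step I expect to be the main obstacle, is the degree bound $\deg_x A_i \le \lfloor in/m\rfloor$. I would obtain it by analyzing the $m$ conjugates $y^{(1)},\dots,y^{(m)}$ of $y$ at the totally ramified point $\infty$: in a local parameter $t$ with $v_\infty(t)=1$ one has $x=t^{-m}(\text{unit})$ and $y=d_0 t^{-n}+\cdots$ with $d_0\ne 0$, so the conjugates arise from $t\mapsto\zeta t$ with $\zeta^m=1$, and each satisfies $|y^{(k)}|\sim|x|^{n/m}$ as $x\to\infty$. Since $A_i=(-1)^i e_i(y^{(1)},\dots,y^{(m)})$ is a polynomial in $x$ whose growth at $\infty$ is $O(|x|^{in/m})$, its degree is at most $in/m$, hence at most $\lfloor in/m\rfloor$. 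For $0<i<m$ the bound is automatically strict because $\gcd(m,n)=1$ forces $in/m$ to be non-integral, while for $i=m$ the norm $A_m=(-1)^m\prod_k y^{(k)}$ has leading term a nonzero multiple of $x^{n}$, so $\deg_x A_m=n$ with equality attained only here; rescaling $y$ by a constant normalizes this leading coefficient to $1$. This establishes the asserted form of $f$ and completes the proof.
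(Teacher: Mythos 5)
Your proof is correct, but it takes a genuinely different route from the paper's. The paper follows Kato's constructive argument: it introduces one generator $y_{m_i}$ for each nonzero residue class modulo $m$, writes out the multiplication table of $y_n$ against these generators inside $H^0(X,\cO_X(*\infty))$, and eliminates the auxiliary generators by Cramer's rule --- the key technical point being that the determinant $P(x,y_n)$ of the resulting linear system is nonzero, which is checked by showing that the monomials $B_i y_n^{m-2-i}$ have pairwise distinct pole orders at $\infty$ because $\gcd(m,n)=1$; substituting the resulting expressions into the relation for $y_n^2$ produces $f$. You instead argue field-theoretically: coprimality of the pole orders forces the place below $\infty$ in $\CC(x,y)$ to be unramified over $\infty$, hence totally ramified of index $m$ over $x=\infty$, which pins down $[\CC(x,y):\CC(x)]=m$ and identifies $f$ as the minimal polynomial of $y$; integrality over $\CC[x]$ gives polynomial coefficients, and the expansion of the $m$ conjugates in a local parameter at the totally ramified place gives the degree bounds $\deg_x A_i\le\lfloor in/m\rfloor$ (a Newton-polygon argument). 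Each approach buys something: yours isolates and cleanly proves the coefficient degree bounds, which the paper's sketch leaves implicit in the final substitution step, and it explains transparently why $\deg_x A_i<in/m$ for $0<i<m$ while $\deg_x A_m=n$ exactly; the paper's construction, by contrast, produces the explicit rational expressions (\ref{eq:WNF5}) for the remaining generators $y_{m_{i_j}}$ in terms of $x$ and $y_n$, together with a presentation of the affine ring, and these are precisely what is used in the examples of \S\ref{rmk:WNF} and in the later tables of the $\phi_i$'s. One cosmetic remark: your normalization fixes the leading coefficient $\lambda_{m,n}$ of $A_m$ (the coefficient of $x^n$) to $1$, which is what the statement must intend; the printed condition $\lambda_{m,m}=1$ is consistent with this only when $n$ is read in place of the second index.
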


We call the pointed curve given in Proposition
 \ref{prop:WNC} {\it{Weierstrass curve}} in this paper.

\begin{proof}
We let
$I_m :=\{m_1, m_2, \ldots, m_{m-1}\}\setminus \{m_{i_0}\}$,
$=:\{m_{i_1}, m_{i_2}, \ldots, m_{i_{m-2}}\}$,
where $i_0$ is such that $n = m_{i_0}$.
Let $y_{m_i}$ be a meromorphic function on $X$ whose only pole is
 $\infty$ with order $m_i$, taking 
 $x=y_{m}$ and $y=y_{n}$.
From the definition of $X$,
we have, as  $\CC$-vector spaces,
$$
H^0(X,\cO_X(*\infty)) =\CC \oplus \sum_{i=0}^{m-1}\sum_{j=0} \CC x^j y_{m_i}.
$$
Thus for every $i_j\in I_m$ $(j=1,2,\ldots,m-2)$,
we obtain the following equations
\begin{gather}
\begin{split}
\left\{
\begin{array}{rcl}
y_{n} y_{m_{i_1}} &=& A_{1,0} +
 A_{1,1} y_{m_{i_1}}+ \cdots +
 A_{1,m-2} y_{m_{i_{m-2}}}+ 
 A_{1,m-1} y_{n}, \\ 
y_{n} y_{m_{i_2}} &=& A_{2,0} +
 A_{2,1} y_{m_{i_1}}+ \cdots +
 A_{2,m-2} y_{m_{i_{m-2}}}+ 
 A_{2,m-1} y_{n}, \\ 
& & \cdots\\
y_{n} y_{m_{i_{m-2}}} &=& A_{{m-2},0} +
 A_{m-2,1} y_{m_{i_1}}+ \cdots +
 A_{m-2,m-2} y_{m_{i_{m-2}}}+ 
 A_{{m-2},m-1} y_{n}, \\ 
\end{array}\right.
\label{eq:WNF2}
\end{split}
\end{gather}
\begin{gather}
y_{n}^2 = A_{{m-1},0} +
 A_{{m-1},1} y_{m_{i_1}}+ \cdots +
 A_{{m-1},m-2} y_{i_{m-2}}+ 
 A_{{m-1},m-1} y_{n},
\label{eq:WNF3}
\end{gather}
where $A_{i,j} \in \CC[x]$.

When $m = 2$, (\ref{eq:WNF1a}) equals  (\ref{eq:WNF3}).
We assume that $m>2$ and then (\ref{eq:WNF2}) is reduced to
\begin{gather}
\begin{split}
\begin{pmatrix}
A_{1,1} - y_n & A_{1,2} & \cdots & A_{1,m-2}\\
A_{2,1} & A_{2,2} - y_n& \cdots & A_{2,m-2}\\
\vdots & \vdots & \ddots & \vdots \\
A_{m-2,1} & A_{m-2,2} & \cdots & A_{m-2,m-2} - y_n\\
\end{pmatrix}
\begin{pmatrix}
y_{m_{i_1}}\\
y_{m_{i_2}}\\
\vdots\\
y_{m_{i_{m-2}}}\\
\end{pmatrix}\qquad&\\
=-
\begin{pmatrix}
A_{1,0} + A_{1, m-1} y_n\\
A_{2,0} + A_{2, m-1} y_n\\
\vdots\\
A_{m-2,0} + A_{m-2, m-1} y_n\\
\end{pmatrix}.&
\label{eq:WNF4}
\end{split}
\end{gather}

One can check that the determinant of the 
 matrix on the left-hand side of (\ref{eq:WNF4}) is not
equal to zero by computing the order of pole at $\infty$ of 
the monomials 
$B_{i} y_n^{m-2-i}$ in the expression,
\begin{gather*}
\begin{split}
P(x, y_n)&:=\left|
\begin{matrix}
A_{1,1} - y_n & A_{1,2} & \cdots & A_{1,m-2}\\
A_{2,1} & A_{2,2} - y_n& \cdots & A_{2,m-2}\\
\vdots & \vdots & \ddots & \vdots \\
A_{m-2,1} & A_{m-2,2} & \cdots & A_{m-2,m-2}-y_n\\
\end{matrix}
\right|\\
&=y_n^{m-2} + B_1 y_n^{m-3} +\cdots
+ B_{m-3} y_n + B_{m-4},
\end{split}
\end{gather*}
which is 
$n (m -2 -i) + m \cdot\deg B_i$.
 The fact that $(m,n)=1$
shows that 
$n (m -2 -i) + m \cdot\deg B_i\neq n (m -2 -j) + m \cdot\deg B_j$
for $i\neq j$.

Hence by solving  equation (\ref{eq:WNF4})
we have
\begin{equation}
y_{j_i} = \frac{Q_{i}(x, y_n)}{P(x, y_n)},
\label{eq:WNF5}
\end{equation}
where $j_i \in I_m$,
 $Q_{i}(x, y_n) \in \CC[x,y_n]$ and a polynomial of order 
at most $m-2$ in $y_n$. 
Note that the equations (\ref{eq:WNF5})  are not independent in general
but in any cases the function field  of the  curve   can be generated by
these $y_{j_i}$'s, and its affine ring can be given by
$\CC[x,y_n, y_{a_3}, \ldots, y_{a_\ell}]$
for $j_i \in M_g$, where
$M_g :=\{ a_1, a_2, \ldots, a_\ell\} \subset \NN^{\ell}$ 
with $(a_{k'}, a_k) = 1$ for $k'\neq k$, $a_1=a_{\min}=m, a_2=n$, 
is a minimal set of
generators for $H(X,\infty)$.

By putting (\ref{eq:WNF5}) into (\ref{eq:WNF3}),
we have  (\ref{eq:WNF1b})
since (\ref{eq:WNF1b}) is irreducible. 
\end{proof}

\begin{remark}
{\rm{
Since every compact Riemann surface of genus $g$ has a Weierstrass point whose
Weierstrass gap sequence with genus $g$ \cite{ACGH},
it is characterized by the behavior of the meromorphic functions around 
the point and thus there is a Weierstrass curve which is 
birationally equivalent
to the compact Riemann surface.
The Weierstrass curve
 admits a local $\mathbb{Z}/m\mathbb{Z}$-action at $\infty$,
 in the following sense.
We assume that a curve in this article is a Weierstrass curve which is
given by (\ref{eq:WNF1b}).
We consider the generator $M_g$ of the Weierstrass semigroup $H=H(X,\infty)$
in the proof.
For a polynomial ring $\CC[Z]:=\CC[Z_1, Z_2, \cdots, Z_\ell]$ and a ring
homomorphism,
$$
\varphi :\CC[Z] \to \CC[ t^{a_1}, t^{a_2}, \cdots, t^{a_\ell}], \quad
(\varphi(Z_i) = t^{a_i}, \quad a_i \in M_g),
$$
we consider a monomial ring $B_H:=\CC[Z]/\ker \varphi$. 
The action is defined by sending $Z_i$ to $\zeta_m^{a_i}Z_i,$ 
where $\zeta_m$  is a primitive $m$-th root of unity. 
Sending $Z_1$ to $1/x$ and $Z_{i}$ to $1/y_{a_i}$, 
the monomial ring $B_H$
determines the structure of gap sequence \cite{He, Pi}.

There are natural projections, 
$$
\varpi_x:X \to \PP, \quad
\varpi_y:X \to \PP,
$$
to obtain $x$-coordinate and $y$-coordinate such that
$\varpi_x(\infty) = \infty$ and 
$\varpi_y(\infty) = \infty$.
}}
\end{remark}

\subsection{Examples: Pentagonal, Trigonal  non-cyclic}\label{rmk:WNF}
We clarify that
 the words ``trigonal'' and ``pentagonal'' are used here only as an indication
 of the fact that the pointed curve $(X,\infty )$ has Weierstrass semigroup of type
 three, five, respectively; a different convention requires that $k$-gonal
 curves not be $j$-gonal for $j < k$ (as in trigonal curves, which by
 definition are not hyperelliptic); of course this cannot be guaranteed in our
 examples. Baker \cite[Ch. V, \S 70]{B97} also points this out.    
\rm{{
\begin{enumerate}
\item[I.] Nonsingular affine equation (\ref{eq:WNF1b}):
$y^5 +A_{4}y^4
 +A_{3}y^3 
 +A_{2}y^2 
 +A_{1}y 
 +A_{0}=0$, 
$n=7$ case:
(\ref{eq:WNF4}) corresponds to
$$
\begin{pmatrix}
-y & 0 & 1\\
1  & -y & 0\\
-A_3  & -A_2 & -A_4 - y\\
\end{pmatrix}
\begin{pmatrix}
y^3 \\ y^2\\ y^4
\end{pmatrix}
=\begin{pmatrix}
0 \\ 0 \\ A_1 y + A_0
\end{pmatrix},
$$
(\ref{eq:WNF5}) becomes
$$
y^{2+i}=-\frac{(A_1 y + A_0)y^i}{
y^3+A_4 y^2 + A_3 y + A_2}.
$$
The curve has 5-semigroup at $\infty$ but is not necessarily cyclic. 
\item[II.] Singular affine equation (\ref{eq:WNF1b}):
$y^5 = k_2(x)^2 k_3(x)$, where
$k_2(x) = (x-b_1)(x-b_2)$,
$k_3(x) = (x-b_3)(x-b_4)(x-b_5)$
for pairwise distinct $b_i\in \CC$:
(\ref{eq:WNF4}) corresponds to
$$
\begin{pmatrix}
-y & 1 & 0\\
0  & -y & 0\\
0 & 0 & - y\\
\end{pmatrix}
\begin{pmatrix}
w \\ y w\\y^2
\end{pmatrix}
=\begin{pmatrix}
0 \\ -k_2 k_3 \\ - k_2 w 
\end{pmatrix}.
$$
The affine ring is
$H^0(\cO_X(*\infty)) =\CC[x, y, w]/(y^3-k_2w, w^2-k_3y, y^2 w - k_2 k_3)$.
Here (\ref{eq:WNF5}) is reduced to
\begin{gather*}
w = \frac{k_2 k_3}{y^2},\quad
yw = \frac{k_2 k_3}{y},\quad
y^2 = \frac{k_2 w}{y}.
\end{gather*}
This is a pentagonal cyclic curve $(X,\infty )$ with $H(X,\infty )=\langle
5,7,11\rangle$.

\item[III.] Singular affine equation (\ref{eq:WNF1b}):
$y^3 + a_1 k_2(x) y^2 
+ a_2\tilde k_2(x) k_2(x) y +k_2(x)^2 k_3(x)=0$, 
where $k_2(x) = (x-b_1)(x-b_2)$,
$k_3(x) = (x-b_3)(x-b_4)(x-b_5)$
$\tilde k_2(x) = (x-b_6)(x-b_7)$,
for pairwise distinct $b_i\in \CC$ and $a_j$ generic constants.
Here (\ref{eq:WNF3}) and
(\ref{eq:WNF4}) correspond to
$$
y^2 +a_1k_2 y + k_2 a_2\tilde k_2 + k_2 w =0, \quad
y w = k_2 k_3.
$$
Multiplying the first equation by $y$ 
and using the second equation gives the curve's 
equation.  
Multiplying  that by $w^2$  gives 
$$
w^3 + a_2\tilde k_2 w^2 + a_1k_2 k_3 w + k_2 k_3^2
=0,
$$
which is reduced to the less order relation with respect to $w$,
$$
w^2 + a_2\tilde k_2 w + a_1k_2 k_3  + k_3y =0.
$$
This curve is trigonal with $H(X,\infty )=\langle 3,7,8\rangle$ but 
not necessarily cyclic.
\end{enumerate}

}}

\subsection{Weierstrass non-gap sequence}

Let the commutative ring $R$ be that of 
the affine part of $(X, \infty)$, $R:=H^0(X, \cO(*\infty))$,
which is also obtained by several 
normalizations of 
$\CC[x,y]/(f(x,y))$ of (\ref{eq:WNF1b}) as a normal ring.

\begin{proposition}
The commutative ring $R$ has the basis $S_R:=\{\phi_i\}_i\subset R$
as a set of monomials of $R$ 
such that
$$
  R= \bigoplus_{i=0} \CC\phi_i 
$$
and $\wt \phi_i < \wt \phi_j $ for $i < j$,
where $\wt : R \to \ZZ$ is the order of the singularity at $\infty$.
\end{proposition}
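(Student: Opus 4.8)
The plan is to build the monomial basis $S_R$ explicitly from the generators of the Weierstrass semigroup $H = H(X,\infty)$, exploiting the fact that $R = H^0(X,\cO(*\infty))$ carries a filtration by pole order at $\infty$. First I would recall from the proof of Proposition \ref{prop:WNC} that the affine ring can be written as $R = \CC[x, y_n, y_{a_3}, \ldots, y_{a_\ell}]$, where $M_g = \{a_1, \ldots, a_\ell\}$ is the minimal generating set of $H$, and that each generator $y_{a_k}$ is a meromorphic function with a single pole at $\infty$ of order exactly $a_k$. Thus every monomial $x^{e_1} y_n^{e_2} y_{a_3}^{e_3} \cdots y_{a_\ell}^{e_\ell}$ has pole order $\sum_k e_k a_k \in H$ at $\infty$, so the map $\wt$ sending a monomial to its order of singularity takes values in the semigroup $H$.

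The key step is to show that for each non-gap $h \in H$ there is exactly one monomial in the basis of weight $h$. I would argue as follows. The semigroup $H$ has precisely $g$ gaps, so its complement in $\NN_0$ is the non-gap set $H$ itself; by the structure of $H^0(X,\cO(*\infty))$ displayed just before, $R$ decomposes as a direct sum of one-dimensional pieces indexed by the non-gaps $h \in H$, namely $\dim_\CC H^0(X,\cO(h\cdot\infty)) - \dim_\CC H^0(X,\cO((h-1)\cdot\infty)) = 1$ exactly when $h \in H$ and $0$ otherwise. Hence for each $h \in H$ I may select a single function $\phi_{i(h)} \in R$ whose pole at $\infty$ has order $h$; choosing these to be monomials in the generators $x, y_n, y_{a_3}, \ldots, y_{a_\ell}$ is possible because any representation $h = \sum_k e_k a_k$ with $e_k \ge 0$ yields such a monomial, and at least one such representation exists precisely because $h \in H = \langle a_1, \ldots, a_\ell\rangle$. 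Ordering the resulting $\{\phi_i\}_i$ by increasing pole order gives $\wt\phi_i < \wt\phi_j$ for $i < j$, and since the weights are distinct and exhaust $H$, the $\phi_i$ are $\CC$-linearly independent (functions with distinct pole orders are independent) and span $R$ by the one-dimensionality of each graded piece.

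The main obstacle is the non-uniqueness of the monomial representing a given weight: when $h \in H$ admits several expressions $\sum_k e_k a_k = h$, the corresponding monomials differ by an element of lower pole order lying in $\ker\varphi$ (the relations among generators, as in the monomial ring $B_H$ of the Remark), so I must verify that a consistent single choice can be made compatibly across all weights. I would handle this by reducing modulo the defining relations (\ref{eq:WNF1b}) and its companions (\ref{eq:WNF5}): these relations express any excess power of $y_n$ (degree $\ge m$) in terms of lower-degree monomials, so I may restrict to monomials with $y_n$-degree $< m$ and, analogously, bound the exponents of the other generators, producing a finite canonical set of monomials whose weights are distinct and fill out $H$. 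A clean way to phrase this is that $R$ is a free module of rank $m$ over $\CC[x]$ with basis $\{1, y_n, y_n^2, \ldots, y_n^{m-1}\}$ after clearing denominators via (\ref{eq:WNF5}); combining the $\CC[x]$-grading by $x$-degree with the $y_n$-degree then selects a unique monomial of each admissible weight, completing the construction of $S_R$.
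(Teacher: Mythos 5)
Your core argument is correct, and in fact the paper offers no proof of this proposition at all --- it is stated and immediately followed by the table of examples, the authors evidently regarding it as standard. What you write in your first two paragraphs is the standard justification: the valuation at $\infty$ is additive on products, so every monomial in the generators $x, y_n, y_{a_3},\dots,y_{a_\ell}$ has weight in $H=\langle a_1,\dots,a_\ell\rangle$; every non-gap $h$ is realized by at least one such monomial; $\dim H^0(X,\cO(h\infty))-\dim H^0(X,\cO((h-1)\infty))$ equals $1$ for $h\in H$ and $0$ otherwise; and functions with pairwise distinct pole orders at the single point $\infty$ are linearly independent and, by induction on pole order, span $R$. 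Two remarks. First, your third paragraph worries about a non-problem: to obtain a basis you only need to fix \emph{one} monomial of each weight $h\in H$, arbitrarily and independently for each $h$; no compatibility between the choices at different weights is required, since the spanning argument uses only that the chosen $\phi_{i(h)}$ has exact pole order $h$. Second, the closing claim that $R$ is free over $\CC[x]$ with basis $\{1,y_n,\dots,y_n^{m-1}\}$ ``after clearing denominators'' is false in general: that set is a $\CC[x]$-basis of $\CC[x,y]/(f)$, whereas $R$ is the normalization and can be strictly larger. In the paper's own Example II the function $w=y^3/k_2(x)$, of pole order $11$, lies in $R$ but not in $\CC[x]\cdot 1+\cdots+\CC[x]\cdot y^4$ (no monomial $x^jy^i$ has pole order $11=5j+7i$), which is exactly why $w$ appears as an extra generator. $R$ is indeed free of rank $m$ over $\CC[x]$, being a torsion-free finite module over a principal ideal domain, but not with that basis; you should simply delete that sentence, as your argument does not need it.
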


The examples are following.

\hskip -1.0cm
{\tiny{
\hskip -2cm
\begin{table}[htb]
	\caption{The $\phi$'s of Examples \S \ref{rmk:WNF} }
  \begin{tabular}{|r|ccccccccccccccc}
\hline
 &0&1&2&3&4&5&6&7&8&9&10&11&12&13&14\\
\hline
 I&$1$&-&-&-&-&$x$&-&$y$&-&-&$x^2$&-&$xy$&-&$y^2$\\
II&$1$&-&-&-&-&$x$&-&$y$&-&-&$x^2$&$w$&$xy$&-& $y^2$\\
 III&$1$&-&-&$x$&-&-&$x^2$&$y$&$w$&$x^3$&$xy$&$xw$&$x^4$&$x^2y$&$y^2$\\
\hline
  \end{tabular}
\\
\hskip 2cm
  \begin{tabular}{cccccccccc|}
\hline
 15&16&17&18&19&20&21&22&23&24\\
\hline
$x^3$&-&$x^2y$&-&$xy^2$&$x^4$&$y^3$&$x^3y$&-&$x^2y^2$\\
$x^3$&$xw$&$x^2y$&$wy$&$xy^2$&$x^4$&$y^3$&$x^3y$&$xyw$&$x^2y^2$\\
$yw$&$w^2$&$xy^2$&$xyw$&$xw^2$&$x^2y^2$&
  $x^2yw$&$x^2w^2$&$x^3y$&$x^3w^2$\\
\hline
  \end{tabular}
\end{table}
}}

$$
\begin{matrix}
\yng(12,8,7,5,4,3,3,2,1,1,1,1).&
\yng(6,3,3,2,1,1,1,1).&
\yng(2,2,1,1),\\
I & II&III\\
\end{matrix}
$$

\section{Abelian integrals and $\theta$ functions}\label{notation}
\subsection{Abelian integrals and $\theta$ functions}

Let us consider a compact Riemann surface $X$ of genus $g$
and its Jacobian $\JJ^\circ(X):={\CC}^g/\Gamma^\circ$ where
$\Gamma^\circ:={\ZZ}^g +\tau {\ZZ}^g$;
$\kappa : \CC^g \to \JJ^\circ(X)$.
Let $\tX$ be an Abelian covering of $X$ ($\varpi: \tX \to X$).
Since the covering space $\tX$ is constructed by a quotient space
of path space (contour of integral) fixing a point $P\in X$,
for $\gamma_{P',P} \in \tX$ whose ending points are $P'\in X$ and $P$,
$\varpi \gamma_{P',P}$ is equal to $P'$, and 
we consider an embedding $X$ into $\tX$ by a map $\iota: X \to \tX$
such that $\varpi\circ \iota = id$.
We fix $\iota$.
For $\gamma_{P',P}\in \tX $, we define the Abelian integral $\twc$
and the Abel-Jacobi map $\wc$,
$$
\twc: \cS^k \tX \to \CC^g, \quad 
\twc(\gamma_{P_1,P}, \cdots, \gamma_{P_k,P}) =
 \sum_{i=1}^k \twc(\gamma_{P_i,P}) =
 \sum_{i=1}^k \int_{\gamma_{P_i,P}} \hnuI, \quad
$$
$$
\wc:=\kappa \circ\twc \circ\iota: \cS^k X \to \JJ^\circ(X),
$$
where $\cS^k \tX$ and $\cS^k X$ are
$k$-symmetric products of $\tX$ and $X$ respectively,
and $\hnuI$ is the column vector of the normalized holomorphic
one forms $\hnuI_i$ $(i=1,2,\ldots,g)$ of $X$;
for the standard basis
$(\alpha_i, \beta_i)_{i=1,\dots,g}$
of Homology of $X$ such that 
$\langle\alpha_i,\alpha_j\rangle=\langle\beta_i,\beta_j\rangle = 0$,
$\langle\alpha_i,\beta_j\rangle = \delta_{ij}$, we have
$$
         \int_{\alpha_i}\hnuI_j = \delta_{ij}, \quad
         \int_{\beta_i}\hnuI_j = \tau_{ij}.
$$
The Abel theorem shows $\kappa \circ\twc = \wc \circ\varpi$.
We fix the point $P \in X$ as a marked point $\infty$ 
of $(X,\infty)$.

The map $\wc$ embeds $X$ into $\JJ^\circ(X)$
and generalizes to a map from the space of divisors of $X$
into $\JJ^\circ(X)$ as 
$\wc(\sum_i n_i P_i):=\sum_in_i\wc(P_i)$, $P_i\in X$,
$n_i\in\ZZ$. Similarly we define $\twc(\iota D)$ for 
a divisor $D$ of $X$.

The Riemann $\theta$ function, analytic in both variables $z$ and $\tau$, is
defined by
$$
\theta(z,\tau ) =
\sum_{n \in \ZZ^{g}} 
\exp\left( 2\pi\ii ({}^t n z + \frac{1}{2}{}^t n \tau n)\right).
$$ 
 The zero-divisor of $\theta$ modulo $\Gamma^\circ$ 
is denoted by  $\Theta:=\kappa\, \mathrm{div}(\theta) \subset \JJ^\circ(X)$.

The $\theta$ function with characteristic $\delta', \delta''\in\RR^{g}$
is defined as:
\begin{equation}
\theta \left[\begin{matrix}\delta'\\ \delta''\end{matrix}
\right] (z, \tau )
   =
   \sum_{n \in \ZZ^g} \exp \big[\pi \sqrt{-1}\big\{
    \ ^t\negthinspace (n+\delta')
      \tau(n+\delta')
   + 2\ {}^t\negthinspace (n+\delta')
      (z+\delta'')\big\}\big].
\label{eq:2.1}
\end{equation}

If $\delta = (\delta',\delta'')\in 
  \{0, \frac{1}{2}\}^{2g}$, then $\theta
\left[\delta\right]\left(z,\tau\right):=\theta
\left[^{\delta'}_{\delta''}\right]\left(z,\tau\right)$ has definite
parity in $z$, $\theta \left[\delta\right]
\left(-z,\tau\right)=e(\delta) \theta \left[\delta\right]
\left(z,\tau\right)$, where $e(\delta):=e^{4\pi \imath{\delta'}^t
\delta''}$. There are $2^{2g}$ different characteristics of definite
parity.

In the paper \cite{KMP16}, we showed that the characteristics are defined 
for a Weierstrass curve by shifting the Abelian integrals and 
the Riemann constant as in Proposition \ref{prop:SRC}.
We recall the basic fact
by Lewittes \cite{Lew}:

\begin{proposition} \label{prop:lew}
{\rm{(Lewittes \cite{Lew})}}

\begin{enumerate}

\item 
Using the Riemann constant $\xi \in \CC^g$,
we have the relation between the $\theta$ divisor 
$\Theta:=$div$(\theta)$ and the standard $\theta$ divisor
$\wc(\cS^{g-1} X)$,
$$
  \Theta = \wc(\cS^{g-1} X) +\xi \quad \mbox{modulo}\quad \Gamma^\circ,
$$
i.e., for $P_i \in X$,
$
\theta(\twc\circ\iota(P_1, \ldots, P_{g-1})+\xi)=0.
$

\item 
The canonical divisor $\cK_X$ of $X$ and the Riemann constant
 $\xi$ have the relation,
$$
\wc(\cK_X) + 2\xi=0\quad \mbox{modulo} \quad\Gamma^\circ.
$$
\end{enumerate}
\end{proposition}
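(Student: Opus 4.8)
The plan is to obtain both relations from the classical Riemann vanishing theorem together with the evenness of the Riemann $\theta$ function, treating (1) as the substantive analytic input and (2) as a short formal consequence of (1).

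For (1) I would prove Riemann's theorem by counting zeros. Fix $c\in\CC^g$ and regard $P\mapsto \theta(\twc(\iota(P))+c)$ as a (multivalued) holomorphic function on $X$. Dissecting $X$ along the canonical homology basis $(\alpha_i,\beta_i)$ into a simply connected polygon $\Delta$, I would evaluate $\frac{1}{2\pi\ii}\oint_{\partial\Delta} d\log\theta$ to count the zeros inside $\Delta$. The quasi-periodicity of $\theta$ under $\Gamma^\circ$ makes the contributions of paired edges of $\partial\Delta$ combine to the constant $g$, so that for generic $c$ the function has exactly $g$ zeros $P_1,\dots,P_g$. Applying Abel's theorem then locates their sum: $\wc(P_1+\cdots+P_g)$ is pinned down by $c$, and tracking the additive constant that appears in the period computation identifies it with the Riemann constant $\xi$. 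This yields $\theta(z)=0$ precisely when $z\equiv \wc(D)+\xi$ for some effective divisor $D$ of degree $g-1$, that is $\Theta=\wc(\cS^{g-1}X)+\xi$ modulo $\Gamma^\circ$, together with the pointwise vanishing $\theta(\twc(\iota(P_1,\dots,P_{g-1}))+\xi)=0$.

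Statement (2) I would then deduce formally. Writing $W:=\wc(\cS^{g-1}X)$, two symmetries of $W+\xi$ are available. First, $\theta$ is even, so $\Theta$ is invariant under $z\mapsto -z$, whence by (1) the subset $W+\xi$ of $\JJ^\circ(X)$ equals $-(W+\xi)$. Second, Riemann--Roch gives $h^0(D)=h^0(\cK_X-D)$ for effective $D$ of degree $g-1$, so $\cK_X-D$ is again linearly equivalent to an effective divisor of degree $g-1$; letting $D$ vary shows $\wc(\cK_X)-W=W$, i.e. $-W=W-\wc(\cK_X)$. Substituting the second identity into $-W-\xi=W+\xi$ and cancelling $W$ gives $-\wc(\cK_X)-\xi=\xi$, that is $\wc(\cK_X)+2\xi=0$ modulo $\Gamma^\circ$.

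The main obstacle lies entirely in (1), and within it in the bookkeeping of the additive constant: the zero count is routine once the quasi-periodicity is set up, but matching the constant of integration to the prescribed Riemann constant $\xi$ in this normalization is the delicate step. Once (1) holds with $\xi$ correctly placed, (2) follows from evenness and Riemann--Roch with no further analysis.
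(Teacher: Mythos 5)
The paper does not actually prove Proposition \ref{prop:lew}: it is quoted as a known fact, with the reference to Lewittes (and ultimately to Riemann) standing in for the argument. Your outline is therefore not competing with an in-paper proof but reconstructing the classical one, and it is the standard reconstruction: part (1) is Riemann's vanishing theorem, proved by integrating $\frac{1}{2\pi\ii}\,d\log\theta(\twc(\iota(P))+c)$ around the boundary of the canonical dissection to count $g$ zeros and then using Abel's theorem to locate their image; part (2) follows from the evenness of $\theta$ combined with Serre duality ($h^0(D)=h^0(\cK_X-D)$ for effective $D$ of degree $g-1$, so $\wc(\cK_X)-W=W$ where $W=\wc(\cS^{g-1}X)$). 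This is correct in outline. Three points deserve to be made explicit if you were to write it out. First, the identification of the additive constant with $\xi$ (which you rightly flag as the delicate step) is the entire content of the proposition in this normalization; asserting that it "is pinned down" is where the computation actually lives. Second, $\Theta=W+\xi$ requires both inclusions: the contour argument gives one, and the converse needs either the irreducibility of $\Theta$ or the observation that for $z=\wc(D)+\xi$ one can choose $c$ so that $\theta(\twc(\iota(P))+c)$ does not vanish identically. Third, the final "cancellation of $W$" from $W+\xi=W-\wc(\cK_X)-\xi$ uses that the theta divisor of a Jacobian has trivial translation stabilizer; this is true for an irreducible principal polarization but is an input, not a formality. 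With those caveats spelled out, your argument is a complete and correct proof of the cited statement.
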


Further in \cite{Jo}, Jorgenson found a crucial relation by
considering the analytic torsion on the Jacobian:

\begin{proposition}\label{prop:Jo}
{\rm{(Jorgenson) \cite[Theorem 1]{Jo}}}
For $P_1, P_2, \ldots, P_{\ell} \in X$, $\ell=g-1$, and general complex numbers
$a_i$ and $b_i$ $(i=1,2, \ldots, g)$, the following holds:
$$
\frac{
\left|\begin{matrix}
\hnuI_1(P_1) & \hnuI_2(P_1) & \cdots &\hnuI_g(P_1)\\
\hnuI_1(P_2) & \hnuI_2(P_2) & \cdots &\hnuI_g(P_2)\\
\vdots & \vdots & \ddots & \vdots \\
\hnuI_1(P_{\ell}) & \hnuI_2(P_{\ell}) & \cdots &\hnuI_g(P_{\ell})\\
a_1 & a_2 & \cdots & a_g \\
\end{matrix}
\right|
}{
\left|\begin{matrix}
\hnuI_1(P_1) & \hnuI_2(P_1) & \cdots &\hnuI_g(P_1)\\
\hnuI_1(P_2) & \hnuI_2(P_2) & \cdots &\hnuI_g(P_2)\\
\vdots & \vdots & \ddots & \vdots \\
\hnuI_1(P_{\ell}) & \hnuI_2(P_{\ell}) & \cdots &\hnuI_g(P_{\ell})\\
b_1 & b_2 & \cdots & b_g \\
\end{matrix}
\right|
}
=
\frac{
\displaystyle{
\sum_{i=1}^g a_i 
\frac{\partial}{\partial z_i }
	\theta(\twc\circ\iota(P_1, P_2, \cdots, P_{\ell})+\xi)
}
}{
\displaystyle{
\sum_{i=1}^g b_i
\frac{\partial}{\partial z_i }
	\theta(\twc\circ\iota(P_1, P_2, \cdots, P_{\ell})+\xi)
}
}.
$$
\end{proposition}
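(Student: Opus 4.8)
The plan is to exploit the fact that both the numerator and the denominator on each side depend \emph{linearly} on the parameters $a_i$ (respectively $b_i$), so that the asserted identity of ratios follows from a single proportionality of two vectors in $\CC^g$. Expanding the $g\times g$ determinant in the numerator of the left-hand side along its last row gives $\sum_{i=1}^g a_i C_i$, where $C_i=(-1)^{g+i}M_i$ and $M_i$ is the $(g-1)\times(g-1)$ minor of the matrix $A:=(\hnuI_j(P_k))_{1\le k\le g-1,\,1\le j\le g}$ obtained by deleting its $i$-th column. I would therefore reduce the whole statement to proving that the cofactor vector $C=(C_1,\dots,C_g)$ is proportional to the gradient vector $\big(\tfrac{\partial}{\partial z_i}\theta(\twc\circ\iota(P_1,\dots,P_{g-1})+\xi)\big)_{i=1,\dots,g}$, with a proportionality constant independent of $i$ (it may depend on the $P_k$). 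Granting this, the common constant together with any local-coordinate normalisations of the rows cancel between numerator and denominator, and the claim holds for $a$ and $b$ simultaneously.

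The key input is Proposition~\ref{prop:lew}(1): the point $\twc\circ\iota(P_1,\dots,P_{g-1})+\xi$ lies on the $\theta$-divisor, so $\theta$ vanishes identically as a function of $(P_1,\dots,P_{g-1})\in\cS^{g-1}X$ at this argument. First I would differentiate this identity. Fixing all but one point and moving $P_k$ along a local coordinate, the chain rule, together with the fact that the derivative of the $j$-th component of $\twc\circ\iota$ with respect to $P_k$ equals $\hnuI_j(P_k)$ up to a nonzero factor common to all $j$, yields for each $k=1,\dots,g-1$ the relation $\sum_{j=1}^g \frac{\partial}{\partial z_j}\theta\big(\twc\circ\iota(P_1,\dots,P_{g-1})+\xi\big)\,\hnuI_j(P_k)=0$. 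In other words, the gradient of $\theta$ at that point is orthogonal to every one of the $g-1$ rows of $A$.

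On the other hand, the cofactor vector $C$ is orthogonal to the same rows: for each $k$, the sum $\sum_{j=1}^g \hnuI_j(P_k)\,C_j$ is the Laplace expansion of the determinant of the $g\times g$ matrix obtained by appending the row $(\hnuI_1(P_k),\dots,\hnuI_g(P_k))$ to $A$, which has a repeated row and hence vanishes. Thus both $C$ and $\nabla\theta$ lie in the orthogonal complement of the row space of $A$. Provided that row space has full dimension $g-1$, its orthogonal complement is one-dimensional, the two vectors are proportional, and since at least one minor $M_i$ is then nonzero the constant is nonzero; the reduction of the first paragraph goes through.

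The main obstacle, and the only place where the hypothesis that the $P_k$ and the $a_i,b_i$ are \emph{general} is used, is precisely this rank statement, i.e.\ that the divisor $D=P_1+\cdots+P_{g-1}$ is non-special. Here I would invoke the Riemann singularity theorem: for a non-special effective divisor of degree $g-1$ the image $\twc\circ\iota(D)+\xi$ is a smooth point of $\Theta$, so $\nabla\theta$ is nonzero there, and correspondingly the $(g-1)\times g$ matrix $A$ has maximal rank $g-1$. Equivalently, the differentials $\hnuI_1,\dots,\hnuI_g$ impose independent conditions on the points, which fails exactly on the special locus; restricting to general $P_k$ removes this degenerate set. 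Once the maximal rank is secured, both $\nabla\theta$ and $C$ are nonzero and span the same line, and the remaining steps are the purely formal determinant expansion and cancellation described above.
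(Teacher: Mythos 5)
Your argument is correct, but note that the paper itself offers no proof of this proposition: it is quoted verbatim as Theorem~1 of Jorgenson \cite{Jo}, and the surrounding text indicates that Jorgenson's own derivation goes through the analytic torsion on the Jacobian. What you have written is therefore a genuinely different, self-contained and more elementary route: differentiate the Riemann vanishing identity of Proposition~\ref{prop:lew}(1) in each $P_k$ to see that the gradient $\bigl(\partial_{z_i}\theta(\twc\circ\iota(P_1,\dots,P_{g-1})+\xi)\bigr)_i$ annihilates every row of $A=(\hnuI_j(P_k))$, observe that the cofactor vector of the last row annihilates the same rows, and conclude proportionality from the one-dimensionality of the common annihilator when $\operatorname{rank}A=g-1$. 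The genericity discussion is also essentially right, with one terminological caveat: every effective divisor $D$ of degree $g-1$ is special in the classical sense ($h^0(\cK_X-D)=h^0(D)\ge 1$ by Riemann--Roch), so the correct condition is not ``non-special'' but $h^0(D)=1$, i.e.\ $D\notin\cS^{g-1}_1X$ --- exactly the stratum the paper later excludes in Theorem~\ref{thm:JIF}; under this condition $\operatorname{rank}A=g-h^0(\cK_X-D)=g-1$ and the Riemann singularity theorem gives $\operatorname{mult}_{\,\twc(D)+\xi}\theta=h^0(D)=1$, so the gradient is nonzero and the argument closes. Your approach buys a transparent proof usable directly in the paper's algebraic setting, at the cost of the extra information (normalization of the proportionality constant via determinants of period data) that Jorgenson's torsion computation provides.
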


In this paper, we investigate this formula for a Weierstrass curve.

\section{Holomorphic one forms of a Weierstrass curve}

In this section we show several basic properties 
of a Weierstrass curve $(X,\infty)$ with 
 Weierstrass semigroup $H(X,\infty)$, including non-symmetric ones. 
We consider the sheaf of the holomorphic one-from  $\cA_X$ and 
its sections $H^0(X, \cA_X(*\infty))$.

The normalized holomorphic one-forms 
$\{\hnuI_i\ |\ i = 1, \ldots, g\}$ of the Weierstrass curve $(X,P)$
hold the following relation:

\begin{lemma}\label{lem:nuIc}
There are $h \in R$, a finite positive
integer $N$ and 
a subset 
$\hS_h:=\{\phi_{\ell_i}\in S_R\ | \ i= 1, \ldots, N\}$
satisfying
$$
 \hnuI_i = \frac{\sum_{j=1}^N \alpha_{ij} \phi_{\ell_j} dx }{h},
\quad
i=1, 2, \ldots, g
$$
where $\alpha_{ij}$ is a certain complex number for $i = 1, \ldots, g$
and $j=1, \ldots, N$.
\end{lemma}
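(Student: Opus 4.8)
The plan is to exploit that, over the function field $\CC(X)$, the space of meromorphic one-forms is one-dimensional, generated by the single reference form $dx$; so every $\hnuI_i$ is a $\CC(X)$-multiple of $dx$, and writing that multiplier as a ratio of elements of $R$ and clearing a common denominator produces exactly the asserted shape. The statement as phrased is an existence (structural) claim, so the argument should be short.

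First I would observe that $x\in R$ is non-constant, since it has a pole of order $m$ at $\infty$, so $dx$ is a nonzero meromorphic one-form on $X$. As the $\CC(X)$-module of meromorphic one-forms is free of rank one (we are in characteristic zero and $\CC(x)\subset\CC(X)$ is finite, hence separable), for each $i$ there is a unique $u_i\in\CC(X)$ with $\hnuI_i=u_i\,dx$. Because $X\setminus\{\infty\}$ is an affine curve with coordinate ring $R=H^0(X,\cO(*\infty))$, its function field is $\mathrm{Frac}(R)=\CC(X)$, so I may write $u_i=g_i/h_i$ with $g_i,h_i\in R$. Setting $h:=h_1h_2\cdots h_g\in R$, which is still regular away from $\infty$, makes $h\,u_i\in R$ for every $i$. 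Finally, expanding each $h\,u_i$ in the monomial basis gives a finite sum $h\,u_i=\sum_k\alpha_{ik}\phi_k$; taking $\hS_h$ to be the union over $i=1,\dots,g$ of the monomials that actually occur yields a finite family $\{\phi_{\ell_1},\dots,\phi_{\ell_N}\}$, and padding with zero coefficients gives $\hnuI_i=(\sum_{j=1}^N\alpha_{ij}\phi_{\ell_j})\,dx/h$ as required. No holomorphy check is needed along the way, since holomorphy of $\hnuI_i$ is built into $u_i=\hnuI_i/dx$.

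It is worth recording the natural first guess for $h$: the form $dx/f_y$, with $f_y=\partial f/\partial y$, is the classical reference one-form for the plane model $f(x,y)=0$, because $f_y$ vanishes exactly at the ramification of $\varpi_x$, where $dx$ also vanishes. When $H(X,\infty)$ is symmetric one checks that $\mathrm{div}(dx/f_y)=(2g-2)\infty$, so $h=f_y$ already works and $\hS_h$ consists of the monomials of weight at most $2g-2$. In the non-symmetric case, however, $dx/f_y$ acquires zeros off $\infty$ — equivalently the monomials of weight $\le 2g-2$ number only $g-1$ rather than $g$ — which is precisely why the statement allows a general $h\in R$ instead of fixing $h=f_y$; at the possibly singular affine points of the plane model one must pass to the normalization to read off these orders correctly.

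The main obstacle is therefore not the existence argument, which is essentially the one-dimensionality-plus-common-denominator observation above, but the bookkeeping one would need if the representation were to be made sharp: controlling which $h\in R$ is forced and describing the index set $\hS_h$ intrinsically through the Weierstrass semigroup and its Young diagram, especially in the non-symmetric case where the naive denominator $f_y$ fails. Since the lemma asks only for existence, I expect the proof to terminate at the common-denominator step, deferring the finer, semigroup-theoretic description to the subsequent sections.
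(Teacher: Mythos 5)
Your argument is correct and follows essentially the same route as the paper's own proof: write each $\hnuI_i$ as $g_i\,dx/h_i$ with $g_i,h_i\in R$ using that $dx$ generates the meromorphic one-forms over $\CC(X)=\mathrm{Frac}(R)$, clear denominators by a common multiple $h$ (the paper takes the least common multiple where you take the product, an immaterial difference), and obtain finiteness of $N$ from the fact that each numerator lies in $R=\bigoplus_i\CC\phi_i$ and hence has bounded pole order at $\infty$. Your closing remarks about $f_y$ and the symmetric versus non-symmetric case are accurate context but not needed for the lemma, which, as you note, is purely an existence statement.
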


\begin{proof}
The Weierstrass curve has the natural projection $\varpi_x : X \to \PP$
and Nagata's Jacobian criterion shows that $dx$ is the natural
one-form. Thus each $\hnuI_i$ has an expression 
$\displaystyle{
 \hnuI_i = \frac{g_i dx }{h_i}
}$ for certain $g_i, h_i \in R$. The $h$ is the  least common multiple of 
$h_i$'s. Due to Riemann-Roch theorem, the numerator must have the finite
order of the singularity at $\infty$ and thus there exists $N<\infty$.
\end{proof}

For the set $\{(h, \hS_h)\}$ whose element satisfies the condition
in Lemma \ref{lem:nuIc}, 
we employ the pair $(h, \hS_h)$ whose $h$ has the least weight
and fix it from here.
The Riemann-Roch theorem enables us to find the basis of 
$H^0(X, \cA_X(*\infty))$.

\begin{definition}
Let us define the subset
$\hS_R$ of $R$ whose element is given by
a finite sum of $\phi_j\in S_R$ with coefficients
$a_{ij} \in \CC$,
$\displaystyle{
\hphi_i := \sum_j a_{ij} \phi_j}$,
satisfying the following relations,
\begin{enumerate}
\item 
for the maximum $j$ in non-vanishing $a_{ij}$, $a_{ij}=1$,
\item
$\wt \hphi_i < \wt \hphi_j$ for $i < j$, and 
\item 
$\displaystyle{H^0(X, \cA_X(*\infty)) = \bigoplus_{i=0} \CC\hphi_i\frac{dx}{h}}$.
\end{enumerate}
\end{definition}

The Riemann-Roch theorem
mean that 
$\displaystyle{
\left\{\nuI_i := \frac{\hphi_{i-1} dx }{h}
\right\}_{i = 1, \ldots, g}}$ is another basis of 
$H^0(X, \cA_X)$.
Let $\hS^{(g)}_R:=\{\hphi_i\}_{i=0, 1, \ldots, g-1}$ which 
gives the canonical embedding of the curve and we call
$\nuI_i$ $(i=1,\ldots,g)$ unnormalized holomorphic one forms.

\begin{definition}
\begin{enumerate}

\item 
The matrix $\omega'$ is defined by
$$
   \nuI = 2\omega' \hnuI, 
\quad \mbox{for}\quad \nuI:=\begin{pmatrix} 
\nu_1 \\ \nu_2 \\ \vdots\\ \nu_g \end{pmatrix},
\quad \hnuI:=\begin{pmatrix} 
\hnuI_1 \\ \hnuI_2 \\ \vdots\\ \hnuI_g \end{pmatrix}.
$$

\item 
The unnormalized Abelian integral and the unnormalized
Abel-Jacobi map are defined by
$$
   \tw(P_1, \ldots, P_k) = 2\omega' \twc(P_1, \ldots, P_k), \quad 
   w(P_1, \ldots, P_k) = 2\omega' \wc(P_1, \ldots, P_k). \quad 
$$
\end{enumerate}
\end{definition}

From the definition, the following lemma is obvious:

\begin{lemma}
By letting $\omega'' := \omega' \tau$, we have
$$
\left(\int_{\alpha_i} \nuI_j\right) = 2 \omega',\quad
\left(\int_{\beta_i} \nuI_j\right) = 2 \omega''.\quad
$$
\end{lemma}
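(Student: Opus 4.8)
The plan is to derive both period relations directly from the defining identity $\nuI = 2\omega'\hnuI$ for the matrix $\omega'$, combined with the normalization of the forms $\hnuI_i$ recorded in Section~\ref{notation}. Since the lemma is asserted to be obvious, the entire argument is a one-line application of the $\CC$-linearity of the period integral; the only point requiring genuine care is the bookkeeping of matrix indices.

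First I would write the defining relation in components. Reading $\nuI$ and $\hnuI$ as the column vectors displayed in the preceding definition, the matrix equation $\nuI = 2\omega'\hnuI$ means $\nuI_j = 2\sum_{k=1}^g \omega'_{jk}\,\hnuI_k$ for each $j$. Because the period integral over a fixed cycle is $\CC$-linear in the integrand and the constants $\omega'_{jk}$ are independent of the path, I can pull the matrix $\omega'$ through the integral sign, which reduces the computation of the period matrices of the unnormalized forms $\nuI$ to the already-known period matrices of the normalized forms $\hnuI$.

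Next I would substitute the normalization relations $\int_{\alpha_i}\hnuI_k = \delta_{ik}$ and $\int_{\beta_i}\hnuI_k = \tau_{ki}$. For the $\alpha$-periods this yields $\int_{\alpha_i}\nuI_j = 2\sum_k \omega'_{jk}\delta_{ik} = 2\omega'_{ji}$, so that the array of $\alpha$-periods of $\nuI$ is $2\omega'$ (up to the transpose fixed by the chosen index convention). For the $\beta$-periods the same substitution gives $\int_{\beta_i}\nuI_j = 2\sum_k \omega'_{jk}\tau_{ki} = 2(\omega'\tau)_{ji}$, which equals $2\omega''_{ji}$ precisely by the definition $\omega'' := \omega'\tau$ introduced in the statement. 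This establishes both identities at once.

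The main, and essentially the only, obstacle is notational rather than mathematical: one must fix once and for all whether the displayed array $\bigl(\int_{\alpha_i}\nuI_j\bigr)$ carries the form index or the cycle index as its first subscript, so that the raw output $2\omega'_{ji}$ of the computation is correctly identified with the matrix $2\omega'$ and not with its transpose. Once this convention is aligned with the one already used to introduce $\omega'$ via $\nuI = 2\omega'\hnuI$, no further work is required, confirming that the lemma is indeed immediate from the definitions.
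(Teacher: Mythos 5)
Your computation is exactly the one the paper has in mind: the paper gives no proof at all, stating only that the lemma is obvious from the definition of $\omega'$ via $\nuI = 2\omega'\hnuI$ together with the normalization of the $\hnuI_i$, and your argument (pull the constant matrix through the period integral, substitute $\delta_{ij}$ and $\tau_{ij}$, note the symmetry of $\tau$ and the index convention) is that omitted one-line verification. The proposal is correct and takes essentially the same approach.
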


We introduce the unnormalized lattice 
$\Gamma := \langle\omega', \omega''\rangle_{\ZZ}$
 and Jacobian $\JJ(X):=\CC^g/\Gamma$.

\hskip -1.0cm

{\tiny{
\begin{table}[htb]
\caption{ The $\hphi$'s in $\hS^{(g)}_R$
 of Examples \S \ref{rmk:WNF} }
  \begin{tabular}{|r|cccccccccccc}
\hline
 &0&1&2&3&4&5&6&7&8&9&10&11\\
\hline
 I&$1$&-&-&-&-&$x$&-&$y$&-&-&$x^2$&-\\
II&-&-&-&-&-&-&-&$y$&-&-&-&$w$\\
 III&-&-&-&-&-&-&-&$y$&$w$&-&$xy$&$xw$\\
\hline
  \end{tabular}
$\qquad\qquad$
$\qquad\qquad$
$\qquad\qquad$
\begin{tabular}{ccccccccccc|}
\hline
12&13&14&15&16&17&18&19&20&21&22\\
\hline
$xy$&-&$y^2$&$x^3$&-&$x^2y$&-&$xy^2$&$x^4$&$y^3$&$x^3y$\\
$xy$&-&$y^2$&-&$xw$&$x^2y$&$wy$&$xy^2$&-&-&-\\
-&-&-&-&-&-&-&-&-&-&-\\
\hline
  \end{tabular}
\end{table}
}}

Then we have the following lemma:
\begin{lemma}\label{lm:4.3}
\begin{enumerate}
\item The matrix $\omega'$ is regular.
\item The divisor of $\displaystyle{\frac{dx}{h}}$ can be expressed by
$(2g-2+d_1)\infty-\fB$ for a certain effective divisor $\fB$ whose degree
is $d_1 \ge 0$, i.e., 
$\displaystyle{\left(\frac{dx}{h}\right) =
(2g-2+d_1)\infty-\fB}$.
\item $\wt \hphi_{g-1}$ is $(2g-2)+d_1$.
\item $(\hphi_i)\ge(\fB-(2g-2+d_1)\infty)$ for every $\hphi_i 
\in \hS_R^{(g)}$, $(i=0, 1, 2,\ldots,g-1)$.
\item $(\hphi_i)\ge(\fB-(g-1+d_1+i)\infty)$ for every $\hphi_i 
\in \hS_R$, $(i\ge g)$.
\end{enumerate}
\end{lemma}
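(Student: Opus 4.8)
The plan is to prove the five assertions in sequence, since each feeds into the next. The whole lemma is really an exercise in bookkeeping with orders of poles/zeros at $\infty$ and the Riemann--Roch theorem, so I would set up the weight function $\wt$ as the master tool. Recall that $\wt\phi$ records the order of the pole at $\infty$, and that the basis $S_R=\{\phi_i\}$ is ordered by increasing weight with $\wt\phi_i$ running exactly over the non-gap sequence $H(X,\infty)$.

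For (1), I would argue that $\omega'$ is the change-of-basis matrix between the unnormalized forms $\nuI=\{\hphi_{i-1}\,dx/h\}$ and the normalized forms $\hnuI$, both of which are bases of the $g$-dimensional space $H^0(X,\cA_X)$ by construction (the defining property (3) of $\hS_R$ together with the Riemann--Roch count). A change of basis between two bases of the same finite-dimensional vector space is invertible, hence $\omega'$ is regular. For (2), the key point is that $dx/h$ is a meromorphic one-form on $X$, so its divisor has degree $2g-2$; I would compute the pole part at $\infty$ directly. Since $\nuI_1=\hphi_0\,dx/h=dx/h$ (as $\hphi_0=1$) is holomorphic, $dx/h$ has no finite poles, so its divisor is effective away from $\infty$ except possibly for a polar part forced at $\infty$; writing $\left(\tfrac{dx}{h}\right)=(2g-2+d_1)\infty-\fB$ with $\fB$ effective of degree $d_1\ge 0$ and disjoint from $\infty$ then follows from $\deg\left(\tfrac{dx}{h}\right)=2g-2$. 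The integer $d_1$ measures exactly the finite zeros of $dx/h$.

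For (3), I would combine (2) with the fact that $\hphi_{g-1}\,dx/h$ is the holomorphic one-form of top weight in $\hS^{(g)}_R$. Because the $\hphi_i\,dx/h$ for $i=0,\dots,g-1$ form a basis of holomorphic differentials, the largest among them has a zero of order $2g-2$ relative to the highest gap; more precisely $\wt\hphi_{g-1}$ plus the order of $dx/h$ at $\infty$ must land on the top of the one-form weights, forcing $\wt\hphi_{g-1}=(2g-2)+d_1$. Statements (4) and (5) are then the assertion that each $\hphi_i\,dx/h$ is genuinely holomorphic (for $i\le g-1$) or has a controlled pole of order at most $g-1+d_1+i$ at $\infty$ (for $i\ge g$): I would translate holomorphicity of $\hphi_i\,dx/h$ into the divisor inequality $(\hphi_i)\ge \fB-(2g-2+d_1)\infty$ using (2), and for $i\ge g$ use that $\nuI_{i+1}=\hphi_i\,dx/h$ lies in $H^0(X,\cA_X(*\infty))$ with pole order at $\infty$ given by its weight, yielding the stated bound after subtracting the divisor of $dx/h$.

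The main obstacle I anticipate is pinning down the precise constant $2g-2+d_1$ and the indexing in (3) and (5), rather than any conceptual difficulty: one must keep careful track of the distinction between the weight $\wt\hphi_i$ (pole order of the \emph{function}) and the order of the \emph{differential} $\hphi_i\,dx/h$, which differ by $\wt(dx/h)=-(2g-2+d_1)$ at $\infty$. In particular, establishing that the top holomorphic weight is exactly $2g-2+d_1$ (and not merely bounded by it) requires knowing that the $g$ weights $\{\wt\hphi_i\}_{i=0}^{g-1}$ are precisely the orders at which $H^0(X,\cA_X(*\infty))$ acquires new holomorphic differentials, which is where the Riemann--Roch theorem and the structure of $\hS_R$ enter decisively. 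Once that identification is secured, assertions (4) and (5) reduce to direct substitution of (2) into the holomorphicity and pole-order conditions.
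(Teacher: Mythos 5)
Your argument for part (2) rests on the premise that $\hphi_0=1$, and this is where the proof breaks. The set $\hS_R$ is not $S_R$: by its defining property (3), each $\hphi_i\,dx/h$ must lie in $H^0(X,\cA_X(*\infty))$, i.e.\ be regular on $X\setminus\infty$, which forces $\hphi_i$ to vanish along the finite polar locus of $dx/h$. Since $h\in R$ has zeros at finite points, $dx/h$ does in general have finite poles, so $1\cdot dx/h$ is \emph{not} regular off $\infty$; indeed in the paper's own Examples II and III (Table 2) the lowest-weight element of $\hS_R$ is $y$, not $1$. Consequently your reading of the divisor is backwards: in $\left(\frac{dx}{h}\right)=(2g-2+d_1)\infty-\fB$ with $\fB$ effective, $\fB$ is the finite \emph{polar} divisor of $dx/h$ and $d_1$ counts finite poles, not finite zeros. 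Your conclusion that the divisor is ``effective away from $\infty$'' would force $\fB=0$, which by the Remark following the lemma happens only when the semigroup is symmetric with $2g-1$ a gap --- precisely the case the lemma is designed to go beyond. What actually needs proving in (2) is that $dx/h$ has no finite \emph{zeros}, so that the finite part is exactly $-\fB\le 0$: a finite zero of $dx/h$ would be a common zero of all the holomorphic forms $\hphi_i\,dx/h$, $i<g$ (the functions $\hphi_i\in R$ being regular there), contradicting base-point-freeness of the canonical system; the degree count $2g-2$ then fixes $\ord_\infty(dx/h)=2g-2+d_1$.

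For (3) the missing input, which is what the paper's one-line proof extracts from Riemann--Roch as ``$\wt\,\nu_g=0$'', is that $1$ is always a gap, so some holomorphic differential does not vanish at $\infty$; that differential is the one with the top-weight numerator, whence $\wt\,\hphi_{g-1}=\ord_\infty(dx/h)=2g-2+d_1$ exactly. Your phrase ``lands on the top of the one-form weights'' gestures at this but does not supply it. Part (4) is fine once (2) is corrected --- it is literally the regularity of $\hphi_i\,dx/h$ rewritten via the divisor of $dx/h$. For (5), ``pole order at $\infty$ given by its weight'' is circular: you must actually compute $\wt\,\hphi_i$ for $i\ge g$ from $h^0(\cK_X+k\infty)=g-1+k$ for $k\ge1$, which gives pole orders $2,3,4,\dots$ for $\hphi_g\,dx/h,\ \hphi_{g+1}\,dx/h,\dots$ (a single simple pole being excluded by the residue theorem) and hence $\wt\,\hphi_i=g+d_1+i$; note that this does not match the printed bound $g-1+d_1+i$ (test $i=g$ in Example I, where $\wt\,\hphi_{12}=24$ while the bound reads $23$), so this step requires explicit care rather than a hand-wave.
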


\begin{proof}
(1) is trivial. (2) is obvious because degree of meromorphic
one-form is $2g-2$ and $h$ is the element of $R$.
From the Riemann-Roch theorem,
$\wt\nuI_g = 0$ and thus, we obtain (3), (4) and (5).
\end{proof}

\begin{remark}
{\rm{
The degree of $\fB$ vanishes, if and only if
the Weierstrass semigroup is symmetric and $2g-1$ is a gap.
}}
\end{remark}

Now we introduce meromorphic functions over $\cS^n X$,
which can be regarded as
a natural generalization of $U$-function of Mumford triplet $UVW$
\cite{Mu}.

\begin{definition} \label{def:mu}
 We define the Frobenius-Stickelberger (FS) matrix with entries in 
$\hS_R$: let $n$ be a positive integer  and
$P_1, \ldots, P_n$ $(1 \le n)$ be in $\Xrs\backslash\infty$,
$$
\hPsi_{n}(P_1, P_2, \ldots, P_n) :=
\begin{pmatrix}
\hphi_0(P_1) &\hphi_1(P_1) & \hphi_2(P_1)  
& \cdots & \hphi_{n-1}(P_1) \\
\hphi_0(P_2) & \hphi_1(P_2) & \hphi_2(P_2)
 & \cdots & \hphi_{n-1}(P_2) \\
\vdots & \vdots & \vdots & \ddots& \vdots \\
\hphi_0(P_n) & \hphi_1(P_{n}) & \hphi_2(P_{n}) 
 & \cdots&  \hphi_{n-1}(P_{n})
\end{pmatrix}.
$$
The
{\it{Frobenius-Stickelberger (FS) determinant}} is
\begin{gather*}
\hpsig{n}(P_1, \ldots, P_n)
 := \det(\hPsig{n}(P_1, \ldots, P_n)).
\end{gather*}
We define the meromorphic function,
$$
\hmu{n}(P): =
\hmu{n}(P; P_1,  \ldots, P_n): =
\lim_{P_i' \to P_i}\frac{1}{\hpsig{n}(P_1' , \ldots, P_n' )}
\hpsig{n+1}(P_1' , \ldots, P_n' , P),
$$
where the $P_i^\prime$ are generic,
the limit is taken (irrespective of the order) for each $i$,
and the meromorphic functions $\hmu{n, k}$'s by
$$
\hmu{n}(P)
 = \phiH{n}(P) +
\sum_{k=0}^{n-1} (-1)^{n-k}\hmu{n, k}(P_1, \ldots, P_n) \phiH{k}(P),
$$
with the convention $\mu_{n, n}(P_1, \ldots, P_n) \equiv $
$\hmu{n, n}(P_1, \ldots, P_n) \equiv 1$.
\end{definition}

\begin{remark}
{\rm{
When $X$ is a hyperelliptic curve, by letting $P\in X$ and $P_i \in X$
 expressed by $(x,y)$ and $(x_i, y_i)$, $\hmu{k}$ is identical to
$U$ in \cite{Mu}:
$$
\hmu{k}(P; P_1, \ldots, P_k) = (x - x_1) (x - x_2) \cdots (x - x_k) 
$$
and each $\hmu{k,i}$ is an elementary symmetric polynomial of 
$x_i$'s. 
}}
\end{remark}

We mention the behavior of the meromorphic function
$\mu_{n, k}(P_1, \ldots, P_n)$,
which is obvious:
\begin{lemma}\label{lem:mu_sing}
For $k<n$,
the order of singularity $\mu_{n, k}(P_1, \ldots, P_n)$ as a
function of $P_n$ at $\infty$ is 
$\wt\hphi_{n}-\wt\hphi_{n-1}$.
\end{lemma}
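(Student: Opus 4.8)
The plan is to read $\hmu{n,k}$ off the Frobenius--Stickelberger determinants directly and then track pole orders at $\infty$ term by term. First I would write $\hmu{n}(P)=\hpsig{n+1}(P_1,\ldots,P_n,P)/\hpsig{n}(P_1,\ldots,P_n)$ and expand the numerator along its last row, the row indexed by $P$, whose entries are $\hphi_0(P),\ldots,\hphi_n(P)$. This gives
$$
\hpsig{n+1}(P_1,\ldots,P_n,P)=\sum_{k=0}^{n}(-1)^{n+k}\hphi_k(P)\,D_k,
$$
where $D_k$ is the minor obtained by deleting the $P$-row and the $\hphi_k$-column. The minor $D_n$ is exactly $\hpsig{n}(P_1,\ldots,P_n)$, so that the coefficient of $\hphi_n(P)$ is $1$, matching the convention $\hmu{n,n}\equiv 1$. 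Comparing the remaining coefficients with the defining expansion of $\hmu{n}(P)$ in Definition \ref{def:mu} (and using $(-1)^{n-k}=(-1)^{n+k}$) identifies
$$
\hmu{n,k}(P_1,\ldots,P_n)=\frac{D_k}{\hpsig{n}(P_1,\ldots,P_n)},\qquad k<n,
$$
where $D_k$ is the determinant of the $n\times n$ array with rows $P_1,\ldots,P_n$ and columns $\{\hphi_0,\ldots,\hphi_n\}\setminus\{\hphi_k\}$.

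Next I would regard both $D_k$ and $\hpsig{n}$ as functions of $P_n$ alone, with $P_1,\ldots,P_{n-1}$ held generic, and expand each along the row indexed by $P_n$. In $\hpsig{n}$ that row is $(\hphi_0(P_n),\ldots,\hphi_{n-1}(P_n))$; because $\wt\hphi_0<\cdots<\wt\hphi_{n-1}$, the entry of largest pole order at $\infty$ is $\hphi_{n-1}(P_n)$, whose cofactor is $\pm\hpsig{n-1}(P_1,\ldots,P_{n-1})$, so $\hpsig{n}$ has a pole of order $\wt\hphi_{n-1}$ at $\infty$ as a function of $P_n$. In $D_k$ with $k<n$ the column $\hphi_n$ is present, hence the $P_n$-row contains $\hphi_n(P_n)$; this is the entry of largest pole order, its cofactor being a generalized Frobenius--Stickelberger minor in $P_1,\ldots,P_{n-1}$, so $D_k$ has a pole of order $\wt\hphi_n$ at $\infty$ as a function of $P_n$. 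Subtracting the two orders yields pole order $\wt\hphi_n-\wt\hphi_{n-1}$ for $\hmu{n,k}$, which is the assertion.

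The only point needing care — and the main, mild, obstacle — is to be sure the highest-weight terms do not cancel, i.e. that the cofactors $\hpsig{n-1}(P_1,\ldots,P_{n-1})$ and the corresponding minor of $D_k$ are nonzero, so that the quotient has pole order exactly equal to the difference rather than something smaller. This is where the genericity of $P_1,\ldots,P_{n-1}$ enters: these cofactors are themselves Frobenius--Stickelberger-type determinants built from the basis functions $\hphi_0,\ldots,\hphi_{n-1}$, which are linearly independent elements of $R$, so each such determinant is not identically zero and is therefore nonzero at generic points. Granting this, the leading terms of numerator and denominator both survive, the order of the quotient at $\infty$ is genuinely $\wt\hphi_n-\wt\hphi_{n-1}$, and the lemma follows; this is precisely why the statement is recorded as obvious.
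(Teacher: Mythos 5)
Your proposal is correct and is essentially the argument the paper has in mind: the paper records this lemma as ``obvious'' and gives no written proof, and the natural justification is exactly what you carry out --- identify $\hmu{n,k}$ as the ratio of the minor $D_k$ (columns $\hphi_0,\ldots,\widehat{\hphi_k},\ldots,\hphi_n$) to $\hpsig{n}$ via Laplace expansion along the $P$-row, then compare leading pole orders $\wt\hphi_n$ and $\wt\hphi_{n-1}$ in the variable $P_n$. Your closing remark about non-cancellation of the leading cofactors at generic $P_1,\ldots,P_{n-1}$ is the one point that actually needs saying, and you handle it correctly.
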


Using $\mu_{n}(P: P_1, \ldots, P_n)$, we have an addition
structure in Jacobin $\JJ(X)$ as the linear system;
{\lq\lq}$\sim${\rq\rq} means the linear equivalence.

\begin{proposition}
The divisor of $\hmu{g-1}(P; P_1,\ldots,P_{g-1})$ with
respect to $P$ is given by
$$
(\hmu{g-1}) =
\sum_{i=1}^{g-1} P_i +
\sum_{i=1}^{g-1} Q_i + \fB - (2g-2 +d_1) \infty,
$$
where $Q_i$'s are certain points  of $X$.
\end{proposition}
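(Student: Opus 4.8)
The plan is to realize $\hmu{g-1}(P;P_1,\ldots,P_{g-1})$, viewed as a function of $P$, as a holomorphic differential divided by $dx/h$, and then to read off its divisor from Lemma~\ref{lm:4.3}. First I would record that, by its defining expansion with the convention $\hmu{g-1,g-1}\equiv 1$, the function
$$
\hmu{g-1}(P)=\hphi_{g-1}(P)+\sum_{k=0}^{g-2}(-1)^{g-1-k}\,\hmu{g-1,k}(P_1,\ldots,P_{g-1})\,\hphi_k(P)
$$
is a $\CC$-linear combination of $\hphi_0,\ldots,\hphi_{g-1}$ whose top-weight term is exactly $\hphi_{g-1}$. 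Hence its only pole is at $\infty$, and by Lemma~\ref{lm:4.3}(3) the order there is $\wt\hphi_{g-1}=2g-2+d_1$; since $\deg(\hmu{g-1})=0$, the effective zero divisor $Z$ of $\hmu{g-1}$ has degree $2g-2+d_1$.

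Next I would multiply by $dx/h$. Because $\nuI_{k+1}=\hphi_k\,dx/h$ for $k=0,\ldots,g-1$ is a basis of $H^0(X,\cA_X)$, the differential
$$
\omega:=\hmu{g-1}(P)\,\frac{dx}{h}=\sum_{k=0}^{g-1}c_k\,\nuI_{k+1}
$$
is a nonzero $\CC$-linear combination of holomorphic one-forms (nonzero since $\hpsig{g-1}(P_1,\ldots,P_{g-1})\ne 0$ for generic $P_i$), hence itself holomorphic, so $(\omega)\ge 0$ and $\deg(\omega)=2g-2$. Combining with Lemma~\ref{lm:4.3}(2), namely $(dx/h)=(2g-2+d_1)\infty-\fB$, gives
$$
(\omega)=(\hmu{g-1})+(2g-2+d_1)\infty-\fB\ge 0 .
$$
Since $\mathrm{supp}\,\fB$ is disjoint from $\infty$ and $dx/h$ has a pole of order $\mathrm{ord}(\fB)$ at each of its points, holomorphy of $\omega$ forces $\hmu{g-1}$ to vanish there to at least that order, i.e. $Z\ge\fB$.

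Then I would locate the remaining zeros. For $P=P_j$ with $1\le j\le g-1$ the matrix $\hPsig{g}(P_1,\ldots,P_{g-1},P_j)$ has two coincident rows, so $\hpsig{g}$ vanishes and hence $\hmu{g-1}(P_j)=0$; for generic $P_1,\ldots,P_{g-1}$ these are $g-1$ distinct simple zeros disjoint from $\mathrm{supp}\,\fB$. Thus $Z\ge\fB+\sum_{i=1}^{g-1}P_i$, a divisor of degree $d_1+(g-1)$, while $\deg Z=2g-2+d_1$; the leftover $Z-\fB-\sum_i P_i$ is effective of degree $g-1$, and naming its points $Q_1,\ldots,Q_{g-1}$ yields
$$
(\hmu{g-1})=\sum_{i=1}^{g-1}P_i+\sum_{i=1}^{g-1}Q_i+\fB-(2g-2+d_1)\infty ,
$$
as claimed.

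The steps are essentially forced once the holomorphy of $\omega$ is in hand, so the main obstacle is not conceptual but bookkeeping: one must confirm for generic $P_1,\ldots,P_{g-1}$ that $\mathrm{supp}\,\fB$, the $P_i$, and the $Q_i$ are mutually disjoint and the $P_i$ are simple zeros, so that the degree count cleanly separates the three contributions $\sum_i P_i$, $\sum_i Q_i$, and $\fB$. For non-generic configurations the identity persists as an equality of divisors counted with multiplicity, but the interpretation of the $Q_i$ as $g-1$ further points relies on the genericity already assumed for $P_1,\ldots,P_{g-1}$.
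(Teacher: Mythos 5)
Your argument is correct and follows essentially the same route as the paper: the paper's proof simply asserts the shape $\sum_i P_i+\sum_{i=1}^{N}Q_i+\fB-(g-1+N+d_1)\infty$ as obvious and then invokes Lemma~\ref{lm:4.3}~(3) to force $N=g-1$, which is exactly your degree count via $\wt\hphi_{g-1}=2g-2+d_1$. Your write-up merely fills in the details the paper leaves implicit (holomorphy of $\hmu{g-1}\,dx/h$ forcing the zeros along $\fB$, and the vanishing at the $P_j$ from the repeated rows of the FS determinant).
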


\begin{proof}
It is obvious that there is a non-negative number $N$ 
satisfying the relation,
$$
(\hmu{g-1}) =
\sum_{i=1}^{g-1} P_i +
\sum_{i=1}^N Q_i + \fB - (g-1 +N +d_1) \infty,
$$
and Lemma \ref{lm:4.3} 
(3)
means $N=g-1$.
\end{proof}

\begin{lemma}\label{lem:fB0}
There is a certain divisor $\fB_0$ such that 
$\fB_1 - d_1 \infty = 2 (\fB_0 - d_0 \infty)$ and thus
we have the following linear equivalence,
$$
\sum_{i=1}^{g-1} P_i +\fB_0 - 2(g-1 +d_0) \infty
\sim
-\left(\sum_{i=1}^{g-1} Q_i + \fB_0 - 2(g-1 +d_0) \infty
	\right).
$$
\end{lemma}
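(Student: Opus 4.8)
The plan is to read the relation off the principal divisor produced in the preceding Proposition and then to split it symmetrically, the only essential input being that the Jacobian is a divisible group. Throughout I identify $\fB_1$ with the divisor $\fB$ of Lemma \ref{lm:4.3}. I would also read $\fB_1-d_1\infty=2(\fB_0-d_0\infty)$ as a \emph{linear equivalence} rather than an equality of divisors: since $\deg(\fB-d_1\infty)=0$ while $\deg\fB=d_1$ need not be even, an honest identity of effective divisors cannot hold in general. Example III of \S\ref{rmk:WNF} already shows this, where $\wt\hphi_{g-1}=11$ and $2g-2=6$ give $d_1=5$ by Lemma \ref{lm:4.3}(3).

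First I would extract the residuation. As $\hmu{g-1}(P;P_1,\ldots,P_{g-1})$ is a genuine meromorphic function of $P$ on $X$, the divisor computed in the preceding Proposition is principal, so
$$
\sum_{i=1}^{g-1}P_i+\sum_{i=1}^{g-1}Q_i+\fB\sim(2g-2+d_1)\infty .
$$
By Lemma \ref{lm:4.3}(2),(3) the right-hand side is the class of the canonical form $dx/h$, so this merely says $\sum P_i+\sum Q_i\sim\cK_X$: the two tuples are residual in the canonical series.

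Next I would construct $\fB_0$. The Jacobian $\JJ(X)=\CC^g/\Gamma$ is a complex torus, hence a divisible abelian group, so the degree-zero class $[\fB-d_1\infty]\in\mathrm{Pic}^0(X)\cong\JJ(X)$ admits a halving: there is a class $c$ with $2c=[\fB-d_1\infty]$. Choosing $d_0\ge g$ and using the Riemann--Roch theorem to realise $c+d_0[\infty]$ by an effective divisor $\fB_0$ of degree $d_0$, I obtain $\fB-d_1\infty\sim2(\fB_0-d_0\infty)$. A canonical choice is available through Lewittes' relation (Proposition \ref{prop:lew}(2)): since $\wc(\infty)=0$ one has $\wc(\fB)=-\wc(\cK_X)=2\xi$, so the natural halving is the one with $\wc(\fB_0-d_0\infty)=\xi$, tying $\fB_0$ to the Riemann constant. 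The point I would treat as the main obstacle is precisely this passage from the automatic $2$-divisibility in the Jacobian to a usable effective representative: there is a $2^{2g}$-fold ambiguity (the theta characteristics), and one must resist reading the outcome as a divisor identity.

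Finally I would combine the two. Substituting $\fB\sim2\fB_0+(d_1-2d_0)\infty$ into the residuation and cancelling the $(d_1-2d_0)\infty$ gives
$$
\sum_{i=1}^{g-1}P_i+\sum_{i=1}^{g-1}Q_i+2\fB_0\sim(2g-2+2d_0)\infty=2(g-1+d_0)\infty ,
$$
and writing the right-hand side as two equal halves yields
$$
\Bigl(\sum_{i=1}^{g-1}P_i+\fB_0-(g-1+d_0)\infty\Bigr)+\Bigl(\sum_{i=1}^{g-1}Q_i+\fB_0-(g-1+d_0)\infty\Bigr)\sim 0 ,
$$
i.e. the two parenthesised divisors, each of degree zero, are negatives of one another in $\mathrm{Pic}^0(X)$. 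This is the asserted symmetric linear equivalence; note that requiring both sides of the final relation to have degree zero forces the coefficient of $\infty$ to be $(g-1+d_0)$.
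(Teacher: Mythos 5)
Your proof is correct and follows essentially the same route as the paper's own (very terse) argument: the linear equivalence is read off from the principal divisor of $\hmu{g-1}$ computed in the preceding Proposition, and $\fB_0$ is obtained by halving the class of $\fB - d_1\infty$ in the Jacobian, which the paper attributes simply to surjectivity of the Abel--Jacobi map and which you justify more carefully via divisibility of the torus together with Riemann--Roch to get an effective representative. Your two readings of the statement agree with what the paper's proof actually establishes: the relation $\fB - d_1\infty = 2(\fB_0 - d_0\infty)$ can only hold as a linear equivalence, and the displayed equivalence is only degree-consistent with $(g-1+d_0)\infty$ in place of $2(g-1+d_0)\infty$, exactly as in the relation $\sum_{i=1}^{g-1} P_i + \sum_{i=1}^{g-1} Q_i + 2\fB_0 - 2(g-1+d_0)\infty \sim 0$ appearing in the paper's proof.
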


\begin{proof}
The Abel-Jacobi theorem implies that since the Abel-Jacobi map is surjective,
the existence $\fB_0$ is obvious.
As the linear equivalence, we have
$$
\sum_{i=1}^{g-1} P_i +
\sum_{i=1}^{g-1} Q_i + 2\fB_0 - 2(g-1 +d_0) \infty
\sim 0,
$$
where $Q_i$'s are certain points  of $X$. 

\end{proof}

Lemma \ref{lem:fB0}
 implies that the image of the Abel-Jacobi map has the symmetry of
the Jacobian  as in the following lemma:

\begin{lemma}\label{lem:Wg-1}
By defining the shifted Abel-Jacobi map,
$$
 w_s (P_1, \ldots,P_{k})
:= w (P_1, \ldots,P_{k}) + w (\fB_0),
$$
the following relation holds:
$$
-w_s (\cS^{g-1}X) = w_s (\cS^{g-1}X).
$$
\end{lemma}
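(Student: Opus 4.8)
The plan is to read $w_s$ as an Abel--Jacobi map on degree-zero divisors and then to convert the linear equivalence furnished by Lemma~\ref{lem:fB0} into an additive-inverse relation in $\JJ(X)$ by means of Abel's theorem. First I would record what the value $w_s(P_1,\ldots,P_{g-1})$ represents. By additivity of $w$ and the normalization that the base point is $\infty$, each $w(P_i)$ is the class of the degree-zero divisor $P_i-\infty$, and $w(\fB_0)$ is the class of $\fB_0-d_0\infty$, where $d_0=\deg\fB_0$ (this last equality of degrees is forced by the defining relation $\fB_1-d_1\infty=2(\fB_0-d_0\infty)$ in Lemma~\ref{lem:fB0}, whose left side has degree zero). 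Hence, modulo $\Gamma$,
$$
w_s(P_1,\ldots,P_{g-1})=w(P_1,\ldots,P_{g-1})+w(\fB_0)
$$
is the Abel--Jacobi image of the degree-zero divisor $D(P):=\sum_{i=1}^{g-1}P_i+\fB_0-(g-1+d_0)\infty$.

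Next I would invoke Lemma~\ref{lem:fB0}, which produces points $Q_1,\ldots,Q_{g-1}\in X$ (the residual zeros of $\hmu{g-1}$) satisfying $D(P)\sim-D(Q)$, where $D(Q):=\sum_{i=1}^{g-1}Q_i+\fB_0-(g-1+d_0)\infty$. Both $D(P)$ and $D(Q)$ have degree zero, so the Abel theorem (Section~\ref{notation}) applies: linearly equivalent degree-zero divisors have equal image under $\wc$, and negating a divisor negates its image. Since $w=2\omega'\wc$, the same holds for $w$ and $w_s$, giving in $\JJ(X)$ the pointwise relation
$$
w_s(P_1,\ldots,P_{g-1})=-\,w_s(Q_1,\ldots,Q_{g-1}).
$$

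Because $(P_1,\ldots,P_{g-1})\in\cS^{g-1}X$ was arbitrary, this identity shows $w_s(\cS^{g-1}X)\subseteq-w_s(\cS^{g-1}X)$; applying the involution $x\mapsto-x$ of $\JJ(X)$ to both sides of this inclusion yields the reverse inclusion $-w_s(\cS^{g-1}X)\subseteq w_s(\cS^{g-1}X)$, and the two together give the asserted equality. I expect the step requiring the most care to be the first one, namely verifying that $w_s$ descends to linear-equivalence classes of degree-zero divisors so that Abel's theorem is legitimately applicable; this is essentially the degree bookkeeping $\deg D(P)=0$ together with the well-definedness of $w$ modulo $\Gamma$. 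Everything after that is the formal transport of the divisor relation $D(P)\sim-D(Q)$ into the Jacobian, and the one-line symmetry argument that upgrades a single inclusion to the desired equality.
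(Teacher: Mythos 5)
Your proposal is correct and follows essentially the same route as the paper: both convert the linear equivalence of Lemma \ref{lem:fB0} into the pointwise identity $w_s(P_1,\ldots,P_{g-1})=-w_s(Q_1,\ldots,Q_{g-1})$ via Abel's theorem, deduce one inclusion between $w_s(\cS^{g-1}X)$ and its negative, and upgrade it to equality by the involution $[-1]$. Your version merely makes explicit the degree bookkeeping that the paper leaves implicit.
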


\begin{proof}
Lemma \ref{lem:fB0} means
$ w_s (P_1, \ldots,P_{g-1})
=-w_s (Q_1, \ldots, Q_{g-1})
$
and thus \break
$-w_s (\cS^{g-1}X)$ $ \subset w_s (\cS^{g-1}X)$.
It leads the relation.
\end{proof}

\begin{proposition}\label{prop:cKX}
The canonical divisor $\cK_X \sim (2g-2+2d_0)\infty - 2\fB_0$.
\end{proposition}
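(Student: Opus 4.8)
The plan is to obtain a concrete canonical divisor representative from a holomorphic one-form and then rewrite it using the halving relation already established in Lemma \ref{lem:fB0}. Since the canonical class $\cK_X$ is by definition the divisor class of any nonzero meromorphic one-form, I would use the distinguished form $dx/h$ introduced just before Lemma \ref{lem:nuIc}. Lemma \ref{lm:4.3}(2) gives its divisor explicitly, so immediately
$$
\cK_X \sim \left(\frac{dx}{h}\right) = (2g-2+d_1)\infty - \fB,
$$
where $\fB$ is the effective divisor of degree $d_1$ appearing there. This is the only place where the specific geometry of the Weierstrass curve enters; everything afterward is formal divisor bookkeeping.

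Next I would invoke Lemma \ref{lem:fB0}, whose content is precisely that $\fB - d_1\infty$ is linearly equivalent to the doubled divisor $2(\fB_0 - d_0\infty)$ for a suitable $\fB_0$ and $d_0$. Substituting $d_1\infty - \fB \sim -2(\fB_0 - d_0\infty) = -2\fB_0 + 2d_0\infty$ into the expression above yields
$$
\cK_X \sim (2g-2)\infty + (d_1\infty - \fB) \sim (2g-2)\infty - 2\fB_0 + 2d_0\infty = (2g-2+2d_0)\infty - 2\fB_0,
$$
which is exactly the claimed relation. The whole argument is therefore a one-line substitution once the two lemmas are in hand.

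The only point requiring care, and the place I would flag as the main subtlety rather than a genuine obstacle, is the reading of the relation in Lemma \ref{lem:fB0}. Because its proof rests on the surjectivity of the Abel--Jacobi map, the equality written there must be understood as a \emph{linear} equivalence of divisors, not an equality in the divisor group; I would state this interpretation explicitly so that the substitution above is justified. It is also worth recording the degree consistency check: $\fB - d_1\infty$ and $2(\fB_0 - d_0\infty)$ are both of degree zero, forcing $\deg\fB = d_1$ and $d_0 = \deg\fB_0$, and then $(2g-2+2d_0)\infty - 2\fB_0$ has degree $2g-2$, matching $\deg\cK_X$. Once this bookkeeping is confirmed, the proof is complete.
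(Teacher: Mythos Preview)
Your argument is correct and is exactly the intended one: the paper states Proposition~\ref{prop:cKX} without proof, as an immediate consequence of Lemma~\ref{lm:4.3}(2) and Lemma~\ref{lem:fB0}, and your substitution carries this out explicitly. Your remark that the relation $\fB - d_1\infty = 2(\fB_0 - d_0\infty)$ in Lemma~\ref{lem:fB0} must be read as a linear equivalence (its proof invokes surjectivity of the Abel--Jacobi map, hence divisibility in the Jacobian) is a useful clarification that the paper leaves implicit.
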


For $(\gamma_1,\gamma_2,\ldots,\gamma_k)\in S^k\tX$,
we also define the shifted Abelian integral $w_s$ by
$$
\tw_s(\gamma_1,\gamma_2,\ldots,\gamma_k) := 
\tw(\gamma_1,\gamma_2,\ldots,\gamma_k) +\tw\circ\iota(\fB_0).
$$
We recall our previous results on the Riemann constant \cite{KMP16}:
\begin{proposition}\label{prop:SRC}

\begin{enumerate}
\item
If $d_1>0$, the Riemann constant $\xi$ is not a half period of $\Gamma^\circ$.

\item
The shifted Riemann constant 
$\xi_s:=\xi-\frac{1}{2}\omega^{\prime-1}
 \tw\circ\iota(\fB_0)$ is the half period of $\Gamma^\circ$.

\item
By using the shifted Abel-Jacobi map, we have
$$
		\Theta = \frac{1}{2}\omega^{\prime-1}w_s(\cS^{g-1} X)+\xi_s
\quad \mbox{modulo}\quad \Gamma^\circ,
$$
i.e., for $P_i \in X$,
$
\theta\left(\frac{1}{2}\omega^{\prime-1}\tw_s\circ\iota(P_1, \ldots, P_{g-1})
+\xi_s\right)=0$.

\item
There is a $\theta$-characteristic $\delta$ of a half period which 
represents
the shifted Riemann constant $\xi_s$, i.e.,
$
\theta[\delta]
\left(\frac{1}{2}\omega^{\prime-1}\tw_s\circ\iota(P_1, \ldots, P_{g-1}) \right)=0.
$

\end{enumerate}

\end{proposition}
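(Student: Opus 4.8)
The plan is to obtain all four assertions from Lewittes' relations (Proposition \ref{prop:lew}), the canonical-class identity (Proposition \ref{prop:cKX}), the halving Lemma \ref{lem:fB0}, and the standard transformation law between the Riemann $\theta$ function and $\theta$ with characteristics. The first thing I would do is dispose of the matrix $\omega'$: because $\tw=2\omega'\twc$, the shift appearing in $\xi_s$ simplifies to $\frac12\omega'^{-1}\tw\circ\iota(\fB_0)=\twc\circ\iota(\fB_0)$, so that $\xi_s=\xi-\wc(\fB_0)$, and likewise $\frac12\omega'^{-1}w_s=\wc+\wc(\fB_0)$ as maps into $\CC^g$. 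With these identifications recorded, parts (3) and (4) become essentially formal.

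For (2) I would apply $\wc$ to Proposition \ref{prop:cKX}; since $\infty$ is the base point of the Abel--Jacobi map we have $\wc(\infty)=0$, hence $\wc(\cK_X)\equiv-2\wc(\fB_0)$ modulo $\Gamma^\circ$. Comparing with Lewittes' second relation $\wc(\cK_X)+2\xi\equiv0$ gives $2\xi\equiv2\wc(\fB_0)$, whence $2\xi_s=2\xi-2\wc(\fB_0)\equiv0$ and $\xi_s$ is a half period. For (3), unwinding the definitions yields $\frac12\omega'^{-1}w_s(\cS^{g-1}X)+\xi_s=\bigl(\wc(\cS^{g-1}X)+\wc(\fB_0)\bigr)+\bigl(\xi-\wc(\fB_0)\bigr)=\wc(\cS^{g-1}X)+\xi$, which is $\Theta$ modulo $\Gamma^\circ$ by Lewittes' first relation; the shift built into $\xi_s$ exactly cancels the shift built into $w_s$.

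Part (1) is where the real content lies. From $2\xi\equiv2\wc(\fB_0)$ together with Lemma \ref{lem:fB0}, which gives $\fB_1-d_1\infty\sim2(\fB_0-d_0\infty)$ and hence $2\wc(\fB_0)\equiv\wc(\fB_1)$ modulo $\Gamma^\circ$ (using $\wc(\infty)=0$), I get $2\xi\equiv\wc(\fB_1)$. Thus $\xi$ is a half period if and only if $\wc(\fB_1)\equiv0$, i.e. if and only if $\fB_1\sim d_1\infty$. I would then rule this out when $d_1>0$: were there an $f\in R$ with $(f)=\fB_1-d_1\infty$, then by Lemma \ref{lm:4.3}(2) the form $f\,dx/h$ would have divisor exactly $(2g-2)\infty$, that is, a holomorphic one-form vanishing only at $\infty$, forcing $\cK_X\sim(2g-2)\infty$ and therefore the semigroup $H(X,\infty)$ to be symmetric; by the remark following Lemma \ref{lm:4.3} this means $d_1=0$, a contradiction. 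This equivalence between symmetry of the Weierstrass semigroup and the existence of a holomorphic differential supported only at $\infty$ (equivalently, $\infty$ being a subcanonical point) is the one genuinely nonformal ingredient, and I expect it to be the main obstacle.

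Finally, for (4), since $\xi_s$ is a half period by (2) I would write $\xi_s\equiv\tau\delta'+\delta''$ with $(\delta',\delta'')\in\{0,\frac12\}^{2g}$, which determines a characteristic $\delta=(\delta',\delta'')$ of definite parity. The transformation law expressing $\theta[\delta](z,\tau)$ as a nonvanishing exponential factor times $\theta(z+\tau\delta'+\delta'',\tau)$ (read off from the definition \eqref{eq:2.1}) then converts the vanishing statement of (3) into $\theta[\delta]\bigl(\frac12\omega'^{-1}\tw_s\circ\iota(P_1,\ldots,P_{g-1})\bigr)=0$, which is exactly (4).
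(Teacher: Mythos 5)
The paper does not actually prove Proposition \ref{prop:SRC} in the text---it is recalled from \cite{KMP16}---so there is no in-text proof to compare against; your derivation is nonetheless correct and is essentially the argument of that reference. Everything follows from Lewittes' relation $\wc(\cK_X)+2\xi\equiv 0$, Proposition \ref{prop:cKX}, and the exact cancellation of the $\wc(\fB_0)$ shifts in (3) and (4), and the one non-formal step---that $d_1>0$ excludes $\cK_X\sim(2g-2)\infty$---is correctly reduced to the Remark following Lemma \ref{lm:4.3} (equivalently, to the fact that $2g-1$ is a non-gap precisely when the semigroup is non-symmetric).
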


In order to describe the Jacobi inversion formulae for the strata of
the Jacobian, we define the Wirtinger variety $\WW_k:=w_s(\cS^k X)$
and 
$$
\Theta_k:= \WW_k \bigcup [-1] \WW_k,
$$
where $[-1]$ is the minus operation on the Jacobian $\JJ(X):=\Theta_g$.
We also define the strata:
$\WW_{s,1}^k:=w_s(\cS^k_1 X)$
($\WW_1^k:=(\cS^k_1 X)$),
where 
$
\cS^n_m(X) := \{D \in \cS^n(X) \ | \
    \mathrm{dim} | D | \ge m\}.
$

The Abel-Jacobi theorem and Lemma \ref{lem:Wg-1} mean
that $\WW_g = \JJ(X)$ and $[-1]\WW_{i}=\WW_i$ for $i = g-1, g$ but
in general, $[-1]\WW_{i}$ does not equal to $\WW_i$ for $i<g-1$.

\section{Jacobi inversion formulae for any Weierstrass curve}

The $\mu$ function enables us to rewrite 
Jorgenson's relation in Proposition \ref{prop:Jo}.
Since any element $u=^t(u_1, u_2, \ldots, u_g)$ of $\CC^g$ is given as 
$u = \tw(\gamma_1, \ldots, \gamma_g)$ for a certain element
$(\gamma_1, \ldots, \gamma_g) \in S^g\tX$,
we consider the differential $\displaystyle{
\partial_i :=\frac{\partial}{\partial u}}$ of a function on
$\CC^g$ and  
have the following proposition:

\begin{proposition}
For $P_1, P_2, \ldots, P_{\ell} \in X$ and $\ell=g-1$,
we have
$$
\hmu{g}(P; P_1, \ldots, P_{\ell})
=
\hphi_{\ell}(P)
+\sum_{i=1}^g 
\frac{
\displaystyle{
\partial_i 
\theta\left(\frac{1}{2}\omega^{\prime-1} 
\tw\circ\iota(P_1, P_2, \cdots, P_{\ell})+\xi\right)
}
}{
\displaystyle{
\partial_{\ell} 
\theta\left(\frac{1}{2}\omega^{\prime-1} 
\tw\circ\iota(P_1, P_2, \cdots, P_{\ell})+\xi\right)
}
}
\hphi_{i-1}(P).
$$
The right hand side does not depend on the choice of the
embedding $\iota$.
\end{proposition}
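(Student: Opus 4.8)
The plan is to read off $\hmu{g}(P;P_1,\dots,P_{\ell})$ as a cofactor expansion of a Frobenius--Stickelberger determinant and then to feed ratios of its minors into Jorgenson's identity (Proposition~\ref{prop:Jo}), with the two auxiliary vectors in that identity chosen so as to encode the change of basis between the normalized forms $\hnuI_i$ and the basis $\{\hphi_{k-1}\,dx/h\}$ of $H^0(X,\cA_X)$. First I would expand the numerator $\hpsig{g}(P_1,\dots,P_{\ell},P)$ along its last row $(\hphi_0(P),\dots,\hphi_{g-1}(P))$, which gives
$$
\hmu{g}(P;P_1,\dots,P_{\ell})
=\sum_{m=1}^{g}(-1)^{g+m}\,
\frac{\Delta_m}{\hpsig{g-1}(P_1,\dots,P_{\ell})}\,\hphi_{m-1}(P),
$$
where $\Delta_m$ is the $(g-1)\times(g-1)$ minor of the matrix $\bigl(\hphi_{k-1}(P_i)\bigr)_{1\le i\le\ell,\,1\le k\le g}$ obtained by deleting the column of $\hphi_{m-1}$; for $m=g$ one has $\Delta_g=\hpsig{g-1}(P_1,\dots,P_{\ell})$, so that term contributes precisely the leading $\hphi_{\ell}(P)$. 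Thus everything reduces to identifying each coefficient $(-1)^{g+m}\Delta_m/\hpsig{g-1}$ with a theta--quotient.

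The bridge is the relation $\hnuI_j=\tfrac12\sum_{k}(\omega'^{-1})_{jk}\,\hphi_{k-1}\,\tfrac{dx}{h}$, which follows from $\nuI=2\omega'\hnuI$ together with $\nuI_k=\hphi_{k-1}\,dx/h$. Writing $C:=\tfrac12\omega'^{-1}$, the first $\ell$ rows of each determinant appearing in Proposition~\ref{prop:Jo} factor as $\diag(D_i)\,\bigl(\hphi_{k-1}(P_i)\bigr)\,C^{t}$, where $D_i:=(dx/h)|_{P_i}$ is the local factor of the evaluation. Choosing the last rows to be $a=Ce_m$ and $b=Ce_g$ (so that $a^{t}=e_m^{t}C^{t}$ and $b^{t}=e_g^{t}C^{t}$) lets me pull $C^{t}$ out of the entire $g\times g$ matrix; the common scalars $\det C$ and $\prod_{i=1}^{\ell}D_i$ then occur in both determinants of the ratio and cancel. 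What remains is the determinant of $\bigl(\hphi_{k-1}(P_i)\bigr)$ bordered by the row $e_m^{t}$, namely $(-1)^{g+m}\Delta_m$, divided by the analogous object for $e_g^{t}$, namely $\Delta_g=\hpsig{g-1}$ --- exactly the coefficient from the first step.

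On the other side of Jorgenson's identity I would use the chain rule: from $u=2\omega'z$ one gets $\partial_i=\sum_j C_{ji}\,\partial_{z_j}$, so $\sum_j a_j\,\partial_{z_j}\theta=\partial_m\theta$ and $\sum_j b_j\,\partial_{z_j}\theta=\partial_g\theta$, all evaluated at $\twc\circ\iota(P_1,\dots,P_{\ell})+\xi=\tfrac12\omega'^{-1}\tw\circ\iota(P_1,\dots,P_{\ell})+\xi$. By Lewittes (Proposition~\ref{prop:lew}) this is a point of the theta divisor, so these first derivatives are the generically nonzero quantities Jorgenson's statement requires. Combining the two sides yields $(-1)^{g+m}\Delta_m/\hpsig{g-1}=\partial_m\theta/\partial_g\theta$, and the $m=g$ term reproduces the leading $\hphi_{\ell}(P)$; assembling over $m$ gives the asserted formula. (The same argument shows the natural denominator is the derivative $\partial_g$ conjugate to the top basis element $\hphi_{g-1}=\hphi_{\ell}$.)

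For the final claim of $\iota$-independence I would observe that it is automatic from the identity just proved, since the left side $\hmu{g}$ is built from Frobenius--Stickelberger determinants of algebraic functions on $X$ and involves no path of integration; alternatively it follows directly from the quasi-periodicity of $\theta$, for changing $\iota$ shifts the argument by a period of $\Gamma^\circ$ and, on the theta divisor where $\theta$ vanishes, every first derivative $\partial_i\theta$ acquires the \emph{same} nonzero automorphy factor, leaving the ratio unchanged. The one genuinely delicate point is the bookkeeping of the second paragraph: verifying that the single matrix $C=\tfrac12\omega'^{-1}$ simultaneously governs the passage from $\hnuI$ to $\{\hphi_{k-1}\,dx/h\}$ and the passage from $\partial_{z_j}$ to $\partial_i$, so that the change-of-basis Jacobian $\det C$ and the local evaluation factors $D_i$ cancel cleanly in Jorgenson's ratio and leave exactly the Frobenius--Stickelberger minors; tracking the signs and the identification $\Delta_g=\hpsig{g-1}$ is routine but must be done with care.
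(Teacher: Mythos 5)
Your argument is correct and is essentially the paper's own proof: both rewrite Jorgenson's identity in terms of the unnormalized differentials $\hphi_{k-1}\,dx/h$ via $C=\tfrac12\omega'^{-1}$ (which simultaneously converts $\partial_{z_j}$ into $\partial_i$) and then specialize the free vectors $a,b$ so that the determinant ratio becomes the Frobenius--Stickelberger ratio defining $\hmu{}$ and the derivative ratio becomes $\partial_i\theta/\partial_g\theta$. The only difference is organizational --- you take $a=Ce_m$ cofactor by cofactor where the paper takes $a=\nuI(P)$ in one step, which is the same computation by linearity --- and your quasi-periodicity argument for $\iota$-independence correctly supplies a detail the paper leaves unstated.
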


\begin{proof}
First we rewrite Proposition \ref{prop:Jo} in terms of
the unnormalized holomorphic one forms.
It is obvious that
for general complex numbers
$a_i$ and $b_i$ $(i=1,2, \ldots, g)$, we have 
$$
\frac{
\left|\begin{matrix}
\nuI_1(P_1) & \nuI_2(P_1) & \cdots &\nuI_g(P_1)\\
\nuI_1(P_2) & \nuI_2(P_2) & \cdots &\nuI_g(P_2)\\
\vdots & \vdots & \ddots & \vdots \\
\nuI_1(P_{\ell}) & \nuI_2(P_{\ell}) & \cdots &\nuI_g(P_{\ell})\\
a_1 & a_2 & \cdots & a_g \\
\end{matrix}
\right|
}{
\left|\begin{matrix}
\nuI_1(P_1) & \nuI_2(P_1) & \cdots &\nuI_g(P_1)\\
\nuI_1(P_2) & \nuI_2(P_2) & \cdots &\nuI_g(P_2)\\
\vdots & \vdots & \ddots & \vdots \\
\nuI_1(P_{\ell}) & \nuI_2(P_{\ell}) & \cdots &\nuI_g(P_{\ell})\\
b_1 & b_2 & \cdots & b_g \\
\end{matrix}
\right|
}
=
\frac{
\displaystyle{
\sum_{i=1}^g a_i 
\partial_i
	\theta(\frac{1}{2}\omega^{\prime -1}\tw\circ\iota(
        P_1, P_2, \cdots, P_{\ell})+\xi)
}
}{
\displaystyle{
\sum_{i=1}^g b_i
\partial_i
	\theta(\frac{1}{2}\omega^{\prime -1}\tw\circ\iota(
        P_1, P_2, \cdots, P_{\ell})+\xi)
}
}.
$$
By letting  $(a_1, \ldots, a_g)=(\nuI_1(P), \ldots, \nuI_g(P))$
and  $(b_1, \ldots, b_g)=(0, \ldots, 0, \nuI_g(P))$ for
a generic point $P \in X$,
the relation is reduced to the proposition.
\end{proof}

Using Jorgenson's result with the ordering of $\hphi_i$, 
we show our main theorem:

\begin{theorem}\label{thm:JIF}
For $(P_1, \ldots, P_k) \in \cS^k X\setminus \cS^k_1 X$ $(k<g)$ 
and a positive integer
$i\le k$, 
we have the relation, 
$$
\hmu{k,i-1}(P_1,\ldots, P_k) = 
\frac{
\displaystyle{
\partial_i 
\theta\left(\frac{1}{2}\omega^{\prime-1} \tw\circ\iota(P_1, P_2, \cdots, P_{k})+\xi\right)
}
}{
\displaystyle{
\partial_{k+1} 
\theta\left(\frac{1}{2}\omega^{\prime-1} \tw\circ\iota(P_1, P_2, \cdots, P_{k})+\xi\right)
}
},
$$
and for a certain $\theta$-characteristic $\delta$ for $\xi_s$,
$$
\hmu{k,i-1}(P_1,\ldots, P_k) = 
\frac{
\displaystyle{
\partial_i 
\theta[\delta]\left(\frac{1}{2}\omega^{\prime-1} 
                 \tw_s\circ\iota(P_1, P_2, \cdots, P_{k})\right)
}
}{
\displaystyle{
\partial_{k+1} 
\theta[\delta]\left(\frac{1}{2}\omega^{\prime-1} 
                  \tw_s\circ\iota(P_1, P_2, \cdots, P_{k})\right)
}
},
$$
especially $k=1$ case,
$$
\frac{\hphi_1(P_1)}{\hphi_0(P_1)}
=
\frac{
\displaystyle{
\partial_1 
\theta[\delta]\left(\frac{1}{2}\omega^{\prime-1} \tw_s\circ\iota(P_1)\right)
}
}{
\displaystyle{
\partial_{2} 
\theta[\delta]\left(\frac{1}{2}\omega^{\prime-1} \tw_s\circ\iota(P_1)\right) }}.
$$
\end{theorem}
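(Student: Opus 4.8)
The plan is to bootstrap the theorem from the preceding Proposition, which is precisely the top stratum $k=g-1$, and then to descend to every lower stratum $k<g-1$ by sending points to $\infty$ one at a time.

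First, for $k=g-1$ I would simply read off coefficients. The preceding Proposition writes $\hmu{g-1}(P;P_1,\ldots,P_{g-1})$ as the $\CC$-linear combination $\sum_{i=1}^{g}(\partial_i\theta/\partial_g\theta)\,\hphi_{i-1}(P)$ of the functions $\hphi_0(P),\ldots,\hphi_{g-1}(P)$, while Definition \ref{def:mu} writes the same function as $\hphi_{g-1}(P)+\sum_{j=0}^{g-2}(-1)^{g-1-j}\hmu{g-1,j}\,\hphi_j(P)$. Since the $\hphi_j$ have pairwise distinct orders at $\infty$ they are linearly independent, so the expansion is unique; comparing the coefficient of $\hphi_{i-1}(P)$ gives $\hmu{g-1,i-1}=\partial_i\theta/\partial_{g}\theta$, the sign bookkeeping being dictated by the convention of Definition \ref{def:mu}. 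As $k+1=g$ here, this is exactly the claimed identity at $k=g-1$.

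Next, for $k<g-1$ I would argue by descending induction, peeling off one point to $\infty$. Starting from the level-$(k+1)$ formula applied to $(P_1,\ldots,P_k,P_{k+1})$, let $P_{k+1}\to\infty$. On the analytic side $\tw\circ\iota(P_{k+1})\to 0$, so the argument $\tfrac12\omega^{\prime-1}\tw\circ\iota(P_1,\ldots,P_{k+1})+\xi$ tends to $\tfrac12\omega^{\prime-1}\tw\circ\iota(P_1,\ldots,P_k)+\xi$; on the algebraic side the degeneration of the Frobenius--Stickelberger coefficients is governed by Lemma \ref{lem:mu_sing}, which pins down the pole order of $\hmu{k+1,j}$ at $\infty$ and thereby identifies which coefficient survives the limit and explains why the pivot index in the denominator drops from $k+2$ to $k+1$. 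The hypothesis $(P_1,\ldots,P_k)\in\cS^k X\setminus \cS^k_1 X$ ensures that the divisor is non-special, so that $\partial_{k+1}\theta$ stays nonzero in the limit and the ratio remains finite and well defined.

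Finally, the characteristic form and the case $k=1$ are routine consequences. Proposition \ref{prop:SRC} gives $\xi=\xi_s+\tfrac12\omega^{\prime-1}\tw\circ\iota(\fB_0)$ with $\xi_s$ represented by a half-period characteristic $\delta$, so substituting and folding the shift into $\tw_s$ turns $\theta(\tfrac12\omega^{\prime-1}\tw\circ\iota(\cdots)+\xi)$ into $\theta[\delta](\tfrac12\omega^{\prime-1}\tw_s\circ\iota(\cdots))$; since the shift is by a constant it commutes with every $\partial_i$ and the ratio is unchanged. For $k=1$ one sets $k=i=1$ and evaluates the defining determinants directly: $\hpsig{1}(P_1)=\hphi_0(P_1)$ and $\hpsig{2}(P_1,P)=\hphi_0(P_1)\hphi_1(P)-\hphi_1(P_1)\hphi_0(P)$, whence $\hmu{1,0}=\hphi_1(P_1)/\hphi_0(P_1)$, which is the last displayed identity. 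The main obstacle is the descent step: one must control simultaneously the pole orders of the $\hmu$-coefficients (Lemma \ref{lem:mu_sing}) and the vanishing orders of the theta-derivatives on the strata $\WW_k$ so that the limit exists and the denominator pivot descends correctly, and it is precisely here that the non-special hypothesis and the Riemann-singularity-type structure of $\WW_k$ are indispensable.
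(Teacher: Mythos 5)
Your proposal follows essentially the same route as the paper: the base case $k=g-1$ is read off from the rewritten Jorgenson relation, and the lower strata are reached by descending induction, sending one point to $\infty$ and using Lemma \ref{lem:mu_sing} to control the degeneration (the paper phrases this step as taking the limit of the ratio $\hmu{k,i-1}/\hmu{k,k-1}$ as $P_k\to\infty$, which is the precise form of your ``which coefficient survives'' bookkeeping, and otherwise defers the details to Theorem 5.1(3) of \cite{MP08}). Your closing remark about simultaneously controlling the pole orders of the $\hmu{}{}$-coefficients and the vanishing of the $\theta$-derivatives on $\WW_k$ correctly identifies the one genuine subtlety that the paper's terse proof leaves implicit.
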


\begin{proof}
Since these $\hphi_i$'s have the natural ordering for $P_k \to \infty$,
we could apply the proof of Theorem 5.1 (3) in \cite{MP08} to this case.
It means that we prove it inductively.
For $k<g$ and $i<k$, 
using the property of Lemma \ref{lem:mu_sing}, we consider 
$\hmu{k,i-1}(P_1, P_2, \ldots, P_k)/\hmu{k,k-1}(P_1, P_2, \ldots, P_k)$
and its limit $P_k \to \infty$. Then we have the relation for
$\hmu{k-1,i-1}$. 
\end{proof}

Theorem \ref{thm:JIF} means the Jacobi inversion formula of $\Theta_k$
for a general Weierstrass curve.
Using the natural basis of the Weierstrass non-gap sequence
 of the Weierstrass curve, we can consider the precise structure of
strata $\Theta_k$ of the Jacobian.
The relation contains the cyclic $(r,s)$ curve cases
and trigonal curve cases reported in \cite{MP08,MP14,KMP18}. 

Since for every compact Riemann surface $Y$,
there exists a Weierstrass curve $(X, \infty)$
which is birationally-equivalent
to $Y$, 
Theorem \ref{thm:JIF} gives the structure of the strata in the Jacobian
of $Y$.

\begin{remark}
{\rm{
Nakayashiki investigated precise structure of the $\theta$ function
for a pointed compact Riemann surface whose Weierstrass gap is associated
with a Young diagram \cite{N16}. He refined the Riemann-Kempf
theory \cite{ACGH} in terms of the unnormalized holomorphic one-forms.
Using the results, it is easy to rewrite the main theorem using
non-vanishing quantities as in \cite{MP14,KMP18}.
}}
\end{remark}

\begin{remark}
{\rm{
Though it is well-known that the sine-Gordon equation is an algebraic relations
of the differentials of $\mu_g$-function in the hyperelliptic Jacobian $\JJ(X)$
\cite{Mu,Pr86}, 
the equation can be generalized 
to one in strata in the Jacobian
$\JJ(X)$ using the $\mu_k$ function $k<g$
\cite{M05}.
Ayano and Buchstaber investigated similar structure of stratum of
the hyperelliptic Jacobian of genus three to find a novel
differential equation which characterizes the stratum \cite{AB}.
In other words, it is expected that these relations in our theorem might show
some algebraic relations in the strata in the Jacobian of a general
compact Riemann surface.

For example,
for a Weierstrass curve $X$ whose $\hphi_1/\hphi_0$ equals to $x$,
we have the Burgers equation,
$$
             \frac{\partial}{\partial u_i} 
\frac{ \partial_1\theta[\delta]\left(
              \frac{1}{2}\omega^{\prime-1} w_s(P)\right) }
{ \partial_2\theta[\delta]\left(\frac{1}{2}\omega^{\prime-1} w_s(P)\right) }
-
\frac{\hphi_j(P)}{\hphi_i(P)}
             \frac{\partial}{\partial u_j} 
\frac{ \partial_1\theta[\delta]
        \left(\frac{1}{2}\omega^{\prime-1} w_s(P)\right) }
{ \partial_2\theta[\delta]\left(\frac{1}{2}\omega^{\prime-1} w_s(P)\right) }=0.
$$
}}
\end{remark}

\begin{remark}
{\rm{
Korotkin and Shramchenko defined the $\sigma$ function for a general
 compact Riemann surface based on the Klein's investigation \cite{KS};
they assume that the $\theta$ characteristic $\delta$ as 
a half period of Jacobian $\Gamma^o$ 
is given by the Riemann constant $\xi$ with $w(D_s)$ of the divisor
$D_s$ of the spin structure of the curve $X$.
The spin structure corresponds to $\fB_0$ 
in  Propositions \ref{prop:SRC} and \ref{prop:cKX} \cite{At}.

We have constructed the $\sigma$ functions for trigonal curves 
\cite{MP08,MP14,KMP18} using the EEL-construction proposed by 
Eilbeck, Enolski\u{\i} and  Le\u{\i}kin \cite{EEL} 
whose origin is Baker \cite{B97}; the EEL-construction
differs from Klein's approach.
Due to the arguments on this paper,
Emma Previato posed a problem whether 
we could reproduce the $\sigma$ function of Korotkin and Shramchenko 
by means of the EEL-construction for the Weierstrass curve.
It means an extension of the EEL-construction of the $\sigma$ function.

Further using the extension and Nakayashiki's investigation in \cite{N16}, 
we could connect our results tr
Sato Grassmannian in the Sato universal Grassmannian theory
and investigate the $\sigma$ function of a Weierstrass curve more precisely.

Since it is known that there are infinitely many numerical semigroups which are
not Weierstrass, but the distribution of Weierstrass semigroups in the
set of the numerical semigroups is not clarified.
Thus it is a natural question how the Weierstrass curves are characterized in
Sato Grassmannian manifolds.
}}
\end{remark}

\section{An Example}
\subsection{Curve $X$ of Example II
 in \S \ref{rmk:WNF}}
Let us consider the curve $X$ of
 Example II in \S \ref{rmk:WNF}.
For a point $P_i=(x_i, y_i, w_i)$ $(i=1,2,\ldots,7)$ of $X$,
the $\hmu{7}$ is given by
$$
\hmu{7}(P,P_1,P_2,\ldots,P_7)=
\frac{
\left|\begin{matrix}
y_1& w_1& x_1y_1& y_1^2 & x_1 w_1& x_1^2y_1& w_1y_1 & x_1y_1^2\\
y_2& w_2& x_2y_2& y_2^2 & x_2 w_2& x_2^2y_2& w_2y_2 & x_2y_2^2\\
y_3& w_3& x_3y_3& y_3^2 & x_3 w_3& x_3^2y_3& w_3y_3 & x_3y_3^2\\
y_4& w_4& x_4y_4& y_4^2 & x_4 w_4& x_4^2y_4& w_4y_4 & x_4y_4^2\\
y_5& w_5& x_5y_5& y_5^2 & x_5 w_5& x_5^2y_5& w_5y_5 & x_5y_5^2\\
y_6& w_6& x_6y_6& y_6^2 & x_6 w_6& x_6^2y_6& w_6y_6 & x_6y_6^2\\
y_7& w_7& x_7y_7& y_7^2 & x_7 w_7& x_7^2y_7& w_7y_7 & x_7y_7^2\\
y& w& xy& y^2 & x w& x^2y& wy & xy^2\\
\end{matrix}\right|}
{\left|\begin{matrix}
y_1& w_1& x_1y_1& y_1^2 & x_1 w_1& x_1^2y_1& w_1y_1 \\
y_2& w_2& x_2y_2& y_2^2 & x_2 w_2& x_2^2y_2& w_2y_2 \\
y_3& w_3& x_3y_3& y_3^2 & x_3 w_3& x_3^2y_3& w_3y_3 \\
y_4& w_4& x_4y_4& y_4^2 & x_4 w_4& x_4^2y_4& w_4y_4 \\
y_5& w_5& x_5y_5& y_5^2 & x_5 w_5& x_5^2y_5& w_5y_5 \\
y_6& w_6& x_6y_6& y_6^2 & x_6 w_6& x_6^2y_6& w_6y_6 \\
y_7& w_7& x_7y_7& y_7^2 & x_7 w_7& x_7^2y_7& w_7y_7 \\
\end{matrix}\right|}.
$$
Thus we have the following Proposition:
\begin{proposition} \label{prop:4.5}

\noindent
1) For $(P, P_1, P_2,\ldots, P_k) \in X \times
	\left(\cS^k(X) \setminus \cS^k_1(X)\right)$ for $k<8$, 
we have
$$
\hmu{k,i-1}(P_1,\ldots, P_k) = 
\frac{
\displaystyle{
\partial_i 
\theta[\delta]\left(\frac{1}{2}\omega^{\prime-1} w_s(P_1, P_2, \cdots, P_{k})
               \right)
}
}{
\displaystyle{
\partial_{8} 
\theta[\delta]\left(\frac{1}{2}\omega^{\prime-1} w_s(P_1, P_2, \cdots, P_{k})
              \right)
}
}
$$
especially
$$
\frac{
\displaystyle{
\partial_1 
	\theta[\delta]\left(\frac{1}{2}\omega^{\prime-1} w_s(P_1)\right)
}
}
{
\displaystyle{
\partial_2 
	\theta[\delta]\left(\frac{1}{2}\omega^{\prime-1} w_s(P_1)\right)}
}
	= \frac{w_1}{y_1}.
$$
\end{proposition}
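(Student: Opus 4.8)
The plan is to read Proposition \ref{prop:4.5} as the specialization of the main theorem, Theorem \ref{thm:JIF}, to the curve $X$ of Example II, supplying the genus and the explicit basis that the theorem consumes. For $y^5=k_2(x)^2k_3(x)$ with $H(X,\infty)=\langle 5,7,11\rangle$ the gap set is $\{1,2,3,4,6,8,9,13\}$, so $g=8$; moreover $2g-1=15=5\cdot 3$ lies in the semigroup, so $H(X,\infty)$ is non-symmetric and $\fB$ has positive degree. Since $\cS^k X\setminus\cS^k_1 X$ is exactly the locus on which Theorem \ref{thm:JIF} is stated, part (1) requires no further genericity hypothesis.

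First I would pin down the ordered basis $\hS^{(g)}_R=\{\hphi_0,\dots,\hphi_7\}$. Reading it off the corresponding table in \S\ref{rmk:WNF} gives
\[
\hphi_0=y,\quad \hphi_1=w,\quad \hphi_2=xy,\quad \hphi_3=y^2,\quad
\hphi_4=xw,\quad \hphi_5=x^2y,\quad \hphi_6=wy,\quad \hphi_7=xy^2 .
\]
To certify that these satisfy the defining conditions of $\hS^{(g)}_R$, I would invoke Lemma \ref{lm:4.3}: since $\nuI_8=\hphi_7\,dx/h$ is non-vanishing at $\infty$, the form $dx/h$ vanishes there to order $\wt\hphi_7=19$, which by Lemma \ref{lm:4.3}(3) equals $2g-2+d_1$ and hence gives $d_1=5$. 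Consequently $\nuI_i=\hphi_{i-1}\,dx/h$ vanishes to order $19-\wt\hphi_{i-1}$, and running through the weights $7,11,12,14,16,17,18,19$ produces the orders $12,8,7,5,3,2,1,0$, i.e.\ precisely $\{\ell_i-1\}$ for the gaps $\ell_i$. Thus $\{\hphi_{i-1}\,dx/h\}_{i=1}^{8}$ is a weight-ordered basis of $H^0(X,\cA_X)$.

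With this basis the determinant displayed for $\hmu{7}(P,P_1,\dots,P_7)$ is precisely $\hpsig{8}/\hpsig{7}$ of Definition \ref{def:mu} (the $8\times 8$ over the $7\times 7$ Frobenius--Stickelberger determinant), so part (1) is Theorem \ref{thm:JIF} at $g=8$, the denominator index $k+1$ collapsing to $8$ at the top stratum $k=g-1=7$. For the distinguished $k=1$ identity the plan is a single Frobenius--Stickelberger evaluation: with $\hphi_0=y$ and $\hphi_1=w$,
\[
\hmu{1}(P;P_1)=\frac{\hpsig{2}(P_1,P)}{\hpsig{1}(P_1)}
=\frac{y_1 w - w_1 y}{y_1}=\hphi_1(P)-\frac{w_1}{y_1}\,\hphi_0(P),
\]
so comparison with $\hmu{1}(P)=\hphi_1(P)-\hmu{1,0}\,\hphi_0(P)$ from Definition \ref{def:mu} yields $\hmu{1,0}(P_1)=w_1/y_1$. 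Feeding $k=1$, $i=1$ (so that $\partial_{k+1}=\partial_2$) into Theorem \ref{thm:JIF} then produces the stated equality.

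I expect the only genuine work to lie in the second paragraph: certifying that the listed monomials really are the holomorphic-form basis $\hS^{(g)}_R$ for this singular, non-symmetric model, which amounts to locating $\fB$ (equivalently $d_1$) and matching the orders of the $\nuI_i$ at $\infty$ against the gap sequence. Everything else is a direct quotation of Theorem \ref{thm:JIF} together with the elementary $2\times 2$ determinant evaluation.
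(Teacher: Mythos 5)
Your proposal is correct and follows the same route as the paper, which offers no separate argument for Proposition \ref{prop:4.5} beyond displaying the Frobenius--Stickelberger quotient for $\hmu{7}$ and invoking Theorem \ref{thm:JIF}: your verification that the monomials $y,w,xy,y^2,xw,x^2y,wy,xy^2$ of weights $7,11,12,14,16,17,18,19$ realize the gap sequence $\{1,2,3,4,6,8,9,13\}$ (hence $g=8$, $d_1=5$), together with the $2\times 2$ evaluation $\hmu{1,0}(P_1)=w_1/y_1$, supplies exactly the details the paper leaves implicit. The only point worth noting is that the denominator $\partial_8$ in the first display literally agrees with the theorem's $\partial_{k+1}$ only at $k=7$; for general $k<8$ the consistent reading is $\partial_{k+1}$, as you observe and as the $k=1$ case with $\partial_2$ confirms.
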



\end{document}